\newcolumntype{C}{>{$} Sc <{$} }
\definecolor{gold}{rgb}{0.88,0.68,0.13}
\definecolor{dgreen}{rgb}{0.0,0.40,0.13}
\definecolor{dblue}{rgb}{0.20,0.20,0.80}
\newcommand{\te}[1]{\operatorname{#1}}
\newcommand{\nn}{\mathbb{N}}
\newcommand{\tn}{\mathbb{T}}
\newcommand{\zn}{\mathbb{Z}}
\newcommand{\Prim}{\operatorname{Prim}}
\newcommand{\Rep}{\operatorname{Rep}}
\newcommand{\Repn}{\te{Rep}_n}
\newcommand{\Irrn}{\te{Irr}_n}
\newcommand{\sse}{\subseteq}
\newcommand{\mc}{\mathcal}
\newcommand{\mt}{\mathtt}
\newcommand{\ds}{\displaystyle}
\newcommand{\ov}{\overline}
\newcommand{\wh}{\widehat}
\newcommand{\hn}{\mc{H}_n}
\renewcommand{\l}{\left}
\renewcommand{\r}{\right}
\newcommand{\usf}{\textsf{U}}
\newcommand{\bsf}{\textsf{B}}
\newcommand{\dimnuc }{\dim_{\operatorname{nuc}}}
\newtheorem{thm}{Theorem}[section]
\newtheorem{prop}[thm]{Proposition}
\newtheorem{cor}[thm]{Corollary}
\newtheorem{lemma}[thm]{Lemma}
\newtheorem{claim}{Claim}
\theoremstyle{definition}
\newtheorem{defn}[thm]{Definition}
\newtheorem{ex}[thm]{Example}
\newcounter{nitem}
\newcounter{alphaitem}
\newtheorem{rmk}[thm]{Remark}
\renewcommand{\thethm}{\Alph{thm}}  
\newcommand{\on}{\operatorname}
\newcommand{\B}{\mathbb}
\newcommand{\N}{\B{N}}
\newcommand{\Z}{\B{Z}}
\newcommand{\C}{\B{C}}
\newcommand{\T}{\B{T}}
\newcommand{\str}{\texorpdfstring}
\newcommand{\ZZ}{\mathcal{Z}}
\newcommand{\TT}{\mathbb{T}}
\newcommand\nnfootnote[1]{%
  \begin{NoHyper}
  \renewcommand\thefootnote{}\footnote{#1}%
  \addtocounter{footnote}{-1}%
  \end{NoHyper}
}
\begin{document}
\title{Nuclear Dimension and Rigidity Results for Virtually Abelian Groups}

\author{Frankie Chan}
\address{FC: Department of Mathematics, University of Chicago, Chicago IL 60637, USA}
\email[]{frankiechan@uchicago.edu}

\author{S. Joseph Lippert}
\address{JL: Department of Mathematics and Statistics, Sam Houston State University, Huntsville TX 77341, USA}
\email[]{sjl054@shsu.edu}

\author{Iason Moutzouris}
\address{IM: Department of Mathematics and Statistics, Sam Houston State University, Huntsville TX 77341, USA}
\email[]{ixm089@shsu.edu \hspace{10mm} 
}

\author{Ellen Weld}
\address{EW: Department of Mathematics and Statistics, Sam Houston State University, Huntsville TX 77341, USA}
\email[]{elw028@shsu.edu}
\date{\today}
\nnfootnote{2020 Mathematics Subject Classification.22D10,22D25,46L05.}

\begin{abstract}
Let $G$ be a finitely generated virtually abelian group. We show that the Hirsch length, $h(G)$, is equal to the nuclear dimension of its group $C^*$-algebra, $\dimnuc (C^*(G))$. We then specialize our attention to a generalization of crystallographic groups dubbed \textit{crystal-like}. We demonstrate that in this scenario a \textit{point group} is well defined and the order of this point group is preserved by $C^*$-isomorphism. We close by using these tools to demonstrate that crystallographic (as a group property) is preserved by $C^*$-isomorphism. These three tools combine to prove that 2D crystallographic groups are $C^*$-superrigid.
\end{abstract}

\maketitle


\section{Introduction}

A question of particular note in the realm of group $C^*$-algebras is that of group invariants recoverable within the algebra. Put directly, if we fix a discrete group $G$ and take any group $H$ such that $C^*(G)\cong C^*(H)$, what can be said about the relationship between $G$ and $H$? In the literature, these questions are referred to as (super)-rigidity questions. In particular, $G$ can be fully recovered (i.e., $G\cong H$) if, for example, $G$ is torsion-free, finitely generated, 2-step nilpotent \cite{tf_2_step_c*_superrigid}, free nilpotent \cite{free_nilp_c*_superrigid}, or belongs to a certain class of Bieberbach groups \cite{Knubyetal}. Moreover, it is known that $G$ and $H$ have the same first Betti numbers \cite{free_nilp_c*_superrigid}. \par

In this article, we narrow our focus to group $C^*$-algebras constructed from finitely generated virtually abelian groups. In this setting, there is a natural concept of dimension for the group called the Hirsch length. This dimension is equal to the rank of a normal abelian subgroup of finite index. For the definition of the Hirsch length in a larger class of groups, we refer the reader to \cite{hillman_finite_Hirsch_length_old}. Our main result draws a direct connection between the Hirsch length of a group and the nuclear dimension of its $C^*$-algebra. 

\begin{thm} (Theorem \ref{main_result_2})
Let $G$ be a discrete, finitely generated, virtually abelian group. Then $\dimnuc C^*(G)=h(G)$.
\end{thm}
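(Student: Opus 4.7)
The plan is to exploit the fact that $C^*(G)$ is a subhomogeneous $C^*$-algebra whose primitive ideal space can be described explicitly via the Mackey machine.

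By hypothesis $G$ contains a finite-index normal abelian subgroup $A \triangleleft G$, and since $G$ is finitely generated so is $A$; write $A \cong \Z^n \oplus T$ with $T$ finite abelian, so that by definition $h(G) = n$ and $\wh{A} \cong \T^n \times \wh{T}$ has topological covering dimension $n$. Setting $F := G/A$, the extension $1 \to A \to G \to F \to 1$ presents $C^*(G)$ as a twisted crossed product $C(\wh{A}) \rtimes_{\alpha,\omega} F$, where $\alpha$ is the induced $F$-action on $\wh{A}$ and $\omega$ is the $2$-cocycle associated to the extension class.

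Since $A$ is abelian and $[G:A] < \infty$, the Mackey machine applies: every irreducible representation of $G$ is obtained by inducing from a subgroup corresponding to the stabilizer $F_\chi \leq F$ of some character $\chi \in \wh{A}$, and its dimension is $[F:F_\chi]\,\dim(\pi)$ for $\pi$ an irreducible $\omega_\chi$-projective representation of $F_\chi$. In particular every irreducible representation of $G$ has dimension at most $|F|$, so $C^*(G)$ is $|F|$-subhomogeneous. Moreover, $\Prim(C^*(G))$ is Hausdorff and naturally homeomorphic to the orbit space $\wh{A}/F$, stratified by the conjugacy classes of stabilizer subgroups of the $F$-action.

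I would then invoke Winter's formula for the nuclear dimension of a separable subhomogeneous $C^*$-algebra, which expresses $\dim_{nuc}(C^*(G))$ as the maximum topological dimension of the strata $X_k = \{[\pi] : \dim \pi = k\} \subseteq \Prim(C^*(G))$. Standard dimension theory for finite group quotients yields $\dim(\wh{A}/F) = \dim(\wh{A}) = n$, so each stratum has dimension at most $n$, while the open stratum corresponding to the principal isotropy group (open and dense in $\wh{A}$) contributes exactly dimension $n$. Combining these gives $\dim_{nuc}(C^*(G)) = n = h(G)$.

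The main obstacle will be packaging the Mackey machine cleanly in the presence of the possibly nontrivial $2$-cocycle $\omega$, so that the identification $\Prim(C^*(G)) \cong \wh{A}/F$ as topological spaces and the stratification into the $X_k$ are rigorous enough to feed into Winter's theorem. Verifying $\dim(\wh{A}/F) = n$ (ruling out artificial dimension drops on non-generic strata and handling the finite disjoint-union structure coming from the torsion summand $T$) is routine dimension theory but deserves care.
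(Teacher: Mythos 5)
Your overall skeleton (subhomogeneity of $C^*(G)$, Winter's formula expressing $\dim_{nuc}$ through the strata $\Prim_k(C^*(G))$, the Mackey machine, and the dimension of an orbit space) matches the paper's, but one of your central assertions is false and the argument breaks there. For a virtually abelian $G$ the space $\Prim(C^*(G))$ is in general \emph{not} Hausdorff and \emph{not} homeomorphic to $\wh{A}/F$: over an orbit $F\cdot\chi$ with nontrivial stabilizer there are several inequivalent irreducibles (one for each $\omega_\chi$-projective irreducible of $F_\chi$), and these fail to separate from limits of representations induced from nearby principal orbits in the Jacobson topology; the paper points to an explicit crystallographic counterexample in [CW24, Section~5]. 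Consequently you cannot deduce $\dim X_k\le n$ from $\dim(\wh{A}/F)=n$ as stated. The paper sidesteps the upper bound entirely by quoting $\dim_{nuc}C^*(G)\le h(G)$ from [BL24] (via asymptotic dimension) and only has to prove the lower bound.

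For the lower bound, the step you describe as ``the open stratum contributes exactly dimension $n$'' conceals the real content. When $C_G(A)\supsetneq A$ the principal isotropy group is nontrivial, so the generic stratum is not simply an open subset of $\wh{A}/F$; to exhibit an $n$-dimensional subspace of a single stratum $\Prim_{\mt{K}}(C^*(G))$ the paper (i) passes to the centralizer $L=C_G(\Z^r)$ and uses B.~H.~Neumann's theorem that $[L,L]$ is finite to extend every character of $\Z^r$ to a genuine (not merely projective) character of $L$, (ii) proves that characters of $\Z^r$ with full orbit are dense in $\T^r$ --- a real argument using connectedness of $\T^r$, not a formality, and one that can fail for other lattices (see the paper's Example on $\Z\times F$ with $\on{Aut}(F)$ acting), (iii) shows the induced map $N_{\mt{K}}/D_1\to\Prim_{\mt{K}}(C^*(G))$ is a homeomorphism onto its image, and (iv) proves that $\Prim_{\mt{K}}(C^*(G))$ is totally normal, without which covering dimension need not be monotone under passing to the subspace produced in (iii). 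Items (i), (iii) and (iv) are absent from your outline, and (iv) in particular cannot be waved away. So the strategy is salvageable, but as written it rests on a false topological identification and omits the technical lemmas that make the generic stratum usable.
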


Nuclear dimension is of additional note outside of the strict question of rigidity. Nuclear dimension plays an  important role on the classification of simple $C^*$-algebras \cite{GLN1,GLN2,tikuisis2017quasidiagonality,elliot2015quasidiagonality}. Indeed, in the case of simple, separable and nuclear $C^*$-algebras, the nuclear dimension can be $0$ (if the $C^*$-algebra is an AF-algebra), $1$ (if it absorbs tensorially the Jiang-Su algebra $\ZZ$) or $+\infty$ (otherwise) \cite{dim_nuc_simple,dim_nuc_simple_st_proj}. \par

In addition, finding the precise value of the nuclear dimension of a (non-simple) $C^*$-algebra has been a very challenging question. In the context of group $C^*$-algebras, Eckhardt and Wu proved \cite{EW24} that every virtually polycyclic group has finite nuclear dimension, generalizing previous results from \cite{EGM19,EM18}. In fact, they found upper bounds that depend only on the Hirsch length of the group. On the other hand, Giol and Kerr proved that $C^*(\Z \wr \Z)$ has infinite nuclear dimension \cite{GK10}. The group $\Z\wr \Z$ has infinite Hirsch length, so a more general connection between nuclear dimension and Hirsch length seems to exist. \par

Returning to the context of rigidity, a corollary to our main theorem is that for finitely generated and virtually abelian groups $C^*(G)\cong C^*(H)$ implies $h(G)=h(H)$. Seeking more results such as this (with special attention towards crystallographic groups), we define a notion of crystal-like for which index (of a particular abelian subgroup) is shown to be invariant. This result then inspires our second main theorem which generalizes work of Curda, Knuby, Raum, Thiel, and White. 

\begin{thm} (Theorem \ref{crystallographic_is_preserved})
Let $G$ be a crystallographic group and $H$ a discrete group such that $C^*(G)\cong C^*(H)$. Then $H$ is crystallographic (of the same dimension and point group order as $G$).
\end{thm}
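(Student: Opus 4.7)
The plan is to combine the two main tools established earlier in the paper: Theorem A (nuclear dimension equals Hirsch length) and the $C^*$-invariance of the point group order for crystal-like groups. The strategy has three stages: (i) show $H$ is finitely generated virtually abelian, (ii) use Theorem A to match the Hirsch lengths of $G$ and $H$, and (iii) invoke the crystal-like machinery, then upgrade \textquotedblleft crystal-like\textquotedblright\ to \textquotedblleft crystallographic\textquotedblright.

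For (i) and (ii): since $G$ is finitely generated virtually abelian, $C^*(G)$ is separable and type I, and this property passes to $C^*(H)\cong C^*(G)$. By Thoma's theorem on discrete group $C^*$-algebras, $H$ is therefore virtually abelian. Finite generation of $H$ should follow from $\dim_{nuc}C^*(H)<\infty$, since a non-finitely-generated virtually abelian group is expected to produce $\Z\wr\Z$-type obstructions forcing infinite nuclear dimension (cf.\ the Giol--Kerr example cited in the introduction). With $H$ established as finitely generated virtually abelian, Theorem A applies on both sides and gives
\[
h(H) \;=\; \dim_{nuc}C^*(H) \;=\; \dim_{nuc}C^*(G) \;=\; h(G) \;=\; n,
\]
matching the dimension. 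Since every crystallographic group is in particular crystal-like, the point-group preservation result then shows that $H$ carries a crystal-like structure whose point group has the same order as that of $G$.

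The main obstacle is the upgrade in stage (iii). Let $A\tleq H$ denote the translation subgroup supplied by the crystal-like structure. A priori $A\cong \Z^n\oplus F$ for some finite abelian group $F$, whereas being crystallographic demands $F=0$ together with $A$ self-centralizing. To force $F=0$, I would detect torsion in $A$ through fine invariants of $C^*(H)$ --- for instance the finite-dimensional irreducible representations, the center, or the stratification of $\Prim C^*(H)$ by point-group orbits. On the $G$ side the translation lattice is torsion-free by hypothesis, so the corresponding invariants pin down enough of the primitive spectrum on the $H$ side to exclude nontrivial $F$. Once $A\cong \Z^n$ is in hand, the fact that $A$ is its own centralizer in $H$ follows because $A$ is a maximal abelian normal subgroup of the correct index, with $|H/A|$ matching the self-centralizing point-group order of $G$; this completes the proof that $H$ is crystallographic of dimension $n$ and of the same point-group order as $G$.
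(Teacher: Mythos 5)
Your proposal has genuine gaps at each of its three stages, and the decisive mechanism of the paper's proof is missing. First, finite nuclear dimension does not imply finite generation for virtually abelian groups: $C^*\bigl(\bigoplus_{i\in\N}\Z_2\bigr)$ is the continuous functions on a Cantor set (nuclear dimension $0$) and $C^*(\Q)$ is the continuous functions on a solenoid (nuclear dimension $1$), yet neither group is finitely generated. So stage (i) fails as stated; the paper never establishes finite generation of $H$ up front --- it falls out only at the very end, when the lattice is identified as $\Z^s$. Second, Corollary \ref{crystal_like_order_of_point_group} compares indices only when \emph{both} $(G,A_G)$ and $(H,A_H)$ are already known to be crystal-like; it does not produce a crystal-like structure on $H$ from one on $G$. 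What $C^*(G)\cong C^*(H)$ gives you directly (via Corollary \ref{cor:max_dim}) is only that $\max\{\dim\pi\,\colon\,\pi\in\wh{H}\}=[G:\Z^r]$; you still must manufacture a lattice in $H$ realizing that index, which your proposal does not do.

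Third, the step you yourself flag as ``the main obstacle'' --- excluding torsion from the lattice and obtaining self-centralizingness --- is exactly where the paper does its real work, and ``detect torsion through fine invariants'' is a wish, not an argument. The paper's route is: $G$ crystallographic $\Rightarrow$ $FC(G)$ is torsion-free (Proposition \ref{characterization of tf fc center}) $\Rightarrow$ $\ZZ(C^*(G))$ has only trivial projections (Proposition \ref{characterization of projectionless center}) $\Rightarrow$ the same holds for $\ZZ(C^*(H))$ $\Rightarrow$ $H$ has a torsion-free, normal, \emph{maximally abelian} (hence self-centralizing) subgroup $L$ of finite index. Then Proposition \ref{computing the center of certain va groups} identifies $\ZZ(C^*(G))\cong C(\wh{N}/D_1)$ and $\ZZ(C^*(H))\cong C(\wh{L}/D_2)$, so $\wh{N}/D_1\cong\wh{L}/D_2$; since the quotient map is a local homeomorphism over the dense open set of principal orbits (Proposition \ref{actions with finite groups give local isom}) and $\wh{N}\cong\T^r$ is locally Euclidean, $\wh{L}$ acquires a dense open locally Euclidean subset, hence is locally Euclidean by homogeneity (Remark \ref{locally_euclidean_homogeneous}), and the structure theory of compact connected abelian groups gives $\wh{L}\cong\T^s$, i.e.\ $L\cong\Z^s$. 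None of these ingredients --- projections in the center detecting torsion in the FC-center, the explicit computation of the center, or the topological transfer of local Euclideanness --- appear in your proposal, so the upgrade from ``crystal-like'' to ``crystallographic'' remains unproved.
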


The crystallographic groups of Hirsch length two \footnote{The Hirsch length in a crystallographic group is also called dimension.} are called \emph{wallpaper groups}. There are 17 wallpaper groups. We close by using Theorem \ref{crystallographic_is_preserved} (and well established $C^*$-invariants) to demonstrate $C^*$-superrigidity of all wallpaper groups. Of particular note are the $15$ wallpaper groups with torsion, these are among the first known examples of infinite, amenable, groups with torsion demonstrating $C^*$-superrigidity.

\section*{Acknowledgements}
We would like to thank Jakub Curda for pointing out Proposition \ref{computing the center of certain va groups} to us. We would also like to thank the referee for their many helpful comments and careful reading of our article.

\section{Preliminaries}

\renewcommand{\thethm}{\thesection.\arabic{thm}}

\subsection{Irreducible representations and subhomogeneous \str{$C^*$}{C*}-algebras.}

In this subsection, we give background information regarding the spectrum of $C^*$-algebras and its topology. For more details, we recommend the classic text {\em $C^*$-algebras} by Dixmier (\cite{Dix77}).

Let $A$ be a $C^*$-algebra. A two-sided ideal of $A$ is said to be {\em primitive} if it is the kernel of a non-zero irreducible representation of $A$ on some Hilbert space. The set of all primitive ideals of $A$ is denoted by $\Prim(A)$ and we endow it with the {\em Jacobson topology}. When given the Jacobson topology, we call $\Prim(A)$ the {\em primitive spectrum of $A$}. If $J\in\Prim(A)$ is the kernel of a dimension $k$ irreducible representation, then we say $\dim J=k$. In particular, we let
\[\Prim_k(A)=\{J\in\Prim (A)\,\colon\, \dim J=k\}.\]
Two irreducible representations $\pi:A\to \bsf(\mc{H})$ and $\pi':A\rightarrow \bsf(\mc{H}')$ are \emph{equivalent} if there exists a unitary operator $U:\mc{H}\to \mc{H}'$ such that $U\pi(a)=\pi'(a)U$ for all $a\in A$. In this case we write $\pi\simeq \pi'$. The {\em spectrum of $A$}, denoted by $\wh{A}$, is the set of non-zero irreducible representations under equivalence ($\pi'\in[[\pi]]\in\wh{A}\iff\pi\simeq\pi'$). This set is endowed with the inverse image of the Jacobson topology under the canonical map $\wh{A}\ni[[\pi]]\mapsto\ker\pi\in\Prim(A)$. \par
We fix the {\em standard Hilbert space of dimension $n$},\label{rmk:standard_hilbert} denoted by $\mc{H}_n$ (with standard basis $\{e_1,\dots,e_n\}$), for each $n\in\zn_{>0}$. We let $\Repn(A)$ be the set of representations of $A$ on $\hn$ and set $\Irrn(A)\sse \Repn(A)$ to be those irreducible representations of dimension $n$. We topologize $\Repn(A)$ (and thus $\Irrn(A)$) by weak pointwise convergence over $A$; that is, $\pi_k\rightarrow\pi$ for $\pi_k,\pi\in\Repn(A)$ means
\[\langle \pi_k(a)\xi,\eta\rangle_{\hn}\rightarrow \langle \pi(a)\xi,\eta\rangle_{\hn}\quad\text{for any }a\in A,\xi,\eta\in\hn.\]
\cite[Prop 3.7.1, 3.7.4]{Dix77} shows that $\Repn(A)$ and $\Irrn(A)$ are separable and completely metrizable.

A $C^*$-algebra $A$ is called \emph{subhomogeneous} if it embeds on a $C^*$-algebra of the form $C(X,M_n)$ for some compact, Hausdorff space $X$ and some $n\in \N$. Equivalently, $A$ is subhomogeneous if there exists $M>0$ such that every irreducible representation of $A$ has dimension $\leq M$. If $A$ is subhomogeneous, then $\widehat{A}\cong \te{Prim}\,(A)$ via the above canonical map (see \cite[3.1.6 (p.71)]{Dix77} and \cite[Thm IV.15.7 (p.339)]{Bla10}). \par

\begin{rmk}\label{rmk:prim_hausdorff}
When $A$ is subhomogeneous, $\te{Prim}_n(A)$ is Hausdorff for each $n\in\zn_{>0}$ \cite[p.83]{Dix77} but $\te{Prim}(A)$ is not Hausdorff in general.
\end{rmk}

\subsection{Pontryagin Dual.}
The {\em Pontryagin dual} of a discrete abelian group $G$ is the set $\widehat{G}:=\on{Hom}(G,\TT)$ endowed with pointwise multiplication and with the topology of pointwise convergence. With this topology, $\widehat{G}$ is compact and Hausdorff. As topological groups, we have $\widehat{\Z^n}=\TT^n$ and $\widehat{\Z_m}=\Z_m$, interpreting the latter as the group of the $m^{\te{th}}$ roots of unity.
\par
Since $\widehat{G\times H}=\widehat{G}\times \widehat{H}$, it follows that for a discrete finitely generated abelian group $A\cong\Z^r\times T$ (where $T$ is a torsion subgroup), we have that $\wh{A}\cong\T^r\times T$. Defining $\rho:\wh{A}\rightarrow \tn^r$ by $\rho(\chi):=\chi|_{\zn^r}$, a sequence $\{\chi_n\}\sse \wh{A}$ converges to $\chi\in \wh{A}$ if and only if (1) $\rho(\chi_n)\rightarrow\rho(\chi)\in\tn^r$ and (2) eventually ${\chi_n|}_{T}\equiv{\chi|}_T$.\par

\subsection{Group \str{$C^*$}{C*}-algebras.}Let $G$ be a discrete group. We define the {\em reduced $C^*$-algebra of $G$} by
\[C_{\lambda}^*(G):=\ov{\lambda_{\ell^1(G)}(\ell^1(G))}^{\|\cdot\|_2}\]
where $\lambda_{\ell^1(G)}$ is the $\ell^1(G)$-representation associated to $\lambda_G:G\rightarrow \bsf(\ell^2(G))$ by setting $\lambda_G(s)f(t)=f(s^{-1}t)$ for all $s\in G$. If instead we close the set $\ell^1(G)$ via 
\[\|f\|_u=\sup\l\{\|\pi(f)\|\,\colon\,\pi\text{ is a *-representation of }\ell^1(G)\r\},\]
then we have defined the {\em full group $C^*$-algebra of $G$}, denoted $C^*(G)$.
When $G$ is {\em amenable}, $C^*(G)$ is isomorphic to $C^*_{\lambda}(G)$. See \cite[Ch. VII]{Dav96} or \cite[13.9 (p.303)]{Dix77} for a more in-depth discussion of this construction.

Except for degeneracy, all the notions of representations for $C^*$-algebras are analogous to those of unitary representations of groups. We use $\usf(\mc{H})$ to denote the group of unitary operators on a Hilbert space, $\mc{H}$. The set of equivalence classes of all irreducible unitary representations of $G$, denoted by $\wh{G}$, is called the {\em unitary dual} of $G$. We observe that when $G$ is discrete and abelian, the unitary dual is in bijection with the Pontryagin dual.

Every irreducible representation of $C^*(G)$ is in a dimension preserving one-to-one correspondence with irreducible unitary representations of $G$ \cite[Ch. VII]{Dav96}. Thus, there is an intimate connection between the spectrum of $C^*(G)$ and the unitary dual of $G$. Except in the case when $G$ is abelian (and, thus, the unitary dual of $G$ may be viewed as its Pontryagin dual), $\wh{G}$ does not possess a native topology. Using the topology on $\wh{C^*(G)}$, we topologize $\wh{G}$ for any discrete group and observe that, in the case $G$ is abelian, $C^*(G)\cong C_{\lambda}^*(G)\cong C(\widehat{G})$. Hence, for any discrete group $G$, $\wh{C^*(G)}$ is homeomorphic to $\wh{G}$, which we denote $\wh{C^*(G)}\cong \wh{G}$. Finally, we note that $\te{Prim}_n(C^*(G))\cong \wh{G}_n$ for each $n$ where $\wh{G}_n$ is the space of $n$-dimensional irreducible unitary representations of $G$.

\subsection{Virtually abelian groups.} A group $G$ is \emph{virtually abelian} if there exists an abelian subgroup $H$ of finite index. In the case $G$ is finitely generated, then so is $H$. Hence, $H$ contains a torsion-free subgroup of finite index, say $K\cong\Z^r$. By taking normal core of $K$ in $G$, there exists $N\unlhd G$ such that $[G:N]<\infty$ and $N\leq K$. Because $N$ has finite index in $K\cong \Z^r$, it follows that $N\cong \Z^r$. We gather the above observations into the following remark.
\begin{rmk}\label{virtually_abelian_general_form}
    A group $G$ is finitely generated and virtually abelian if and only if $G$ fits into a short exact sequence of the form
    \begin{equation}\label{eqn:virtually_abelian_defn}
         1\rightarrow \Z^r\stackrel{i}{\rightarrow} G\stackrel{s}{\rightarrow}D\rightarrow 1
    \end{equation}
    with $|D|<\infty$. 
\end{rmk}

The number $r$ above is the {\em rank of $G$}. It is also called the \emph{Hirsch length} (we write $h(G)=r$). In fact, the Hirsch length is well-defined for every virtually abelian group (and more generally for every elementary amenable group). For more information regarding the Hirsch length and why it is well-defined, we refer the reader to \cite{hillman_finite_Hirsch_length_old}. \par
Let $G$ be virtually abelian and identify $i(\zn^r)\unlhd  G$ with $\zn^r$ where we treat $\zn^r$ as a multiplicative group. Because $\zn^r$ is normal in $G$, there is a natural action of $G$ on $\Z^r$ defined by $g\cdot a=gag^{-1}$ for all $g\in G$, $a\in \Z^r$. Let $\gamma:D\rightarrow G$ be a section with $\gamma(1_D)=1_G$. Then, the action of $G$ on $\Z^r$ $(G\curvearrowright \Z^r$) descends to an action of $D$ on $\Z^r$ by $d\cdot a=\gamma(d)\cdot a$. Notice the induced action is independent of the section we choose. 

We also have an induced (left) action $G\curvearrowright\wh{\Z}^r$ given by 
\[(g\cdot \chi)(a)=\chi(g^{-1}ag)\text{ for all }g\in G, \chi\in\wh{\Z}^r, a\in \Z^r.\]
This action descends to an action of $D$ on $\widehat{\Z}^r$. For each $\chi\in \wh{\Z}^r$, we define
\begin{equation}\label{eqn:stabilizer_and_orbit}
G_{\chi}=\{g\in G\,\colon\,g\cdot \chi=\chi\}\quad\te{ and }\quad\mc{O}_{\chi}=\{g\cdot\chi\,\colon\,g\in G\}
\end{equation}
to be the stabilizer subgroup associated to $\chi$ and the orbit associated to $\chi$, respectively. We observe that $|\mc{O}_{\chi}|={|G/ G_{\chi}|}$, $\Z^r\leq G_{\chi}$, and $|\mc{O}_{\chi}|$ divides $|D|$ for all $\chi\in\wh{\Z}^r$. \par

\begin{thm}[\cite{Moo72}, \cite{Dix77}]\label{note:subhomogeneous}
  Let $G$ be a discrete group. Then $C^*(G)$ is separable and subhomogeneous if and only if $G$ is a countable, virtually abelian group. 
\end{thm}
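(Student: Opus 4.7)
The plan is to prove the two implications separately, with the nontrivial content of the backward direction being supplied by the cited theorem of Moore.

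\textbf{Forward direction.} Suppose $G$ is countable and virtually abelian. Separability is immediate: the set of finitely supported functions $f\colon G\to\Q+i\Q$ is a countable $\ast$-subalgebra of $\ell^1(G)$ which is $\|\cdot\|_1$-dense, hence $\|\cdot\|_u$-dense in $C^*(G)$. For subhomogeneity, I would use Remark \ref{virtually_abelian_general_form} to fix a short exact sequence $1\to\Z^r\to G\to D\to 1$ with $|D|<\infty$ and then invoke the Mackey normal subgroup analysis. Given an irreducible unitary representation $\pi:G\to \usf(\mc{H})$, the spectral theorem applied to $\pi|_{\Z^r}$ produces a projection-valued measure on $\wh{\Z}^r$; irreducibility together with the induced action $G\curvearrowright\wh{\Z}^r$ forces this measure to be concentrated on a single $G$-orbit $\mc{O}_\chi$. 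Mackey's theorem then identifies $\pi$ with $\te{Ind}_{G_\chi}^G(\sigma)$ for some irreducible representation $\sigma$ of the stabilizer $G_\chi$ whose restriction to $\Z^r$ is isotypic of type $\chi$. Since $\Z^r\leq G_\chi$ and $G_\chi/\Z^r$ embeds into $D$, the irreducible representations of $G_\chi$ lying above $\chi$ correspond to irreducible representations of the finite group $G_\chi/\Z^r$ (possibly twisted by a $2$-cocycle), whose dimensions are bounded by $|D|$. Combined with $|G:G_\chi|\leq|D|$ this gives $\dim\pi\leq |D|^2$, so $C^*(G)$ is subhomogeneous.

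\textbf{Backward direction.} Suppose $C^*(G)$ is separable and subhomogeneous. For countability, I would observe that in the left regular representation $\lambda_G$ one has $\|\lambda_G(g)\delta_e-\lambda_G(h)\delta_e\|_2=\|\delta_g-\delta_h\|_2=\sqrt{2}$ whenever $g\neq h$, so $\|u_g-u_h\|_{C^*(G)}\geq \|\lambda_G(g)-\lambda_G(h)\|\geq \sqrt{2}$. Hence the set $\{u_g:g\in G\}\sse C^*(G)$ is uniformly discrete, and a separable metric space cannot contain an uncountable uniformly discrete subset, so $G$ is countable. For virtual abelianness, subhomogeneity is precisely the statement that there is a uniform bound on the dimensions of the irreducible unitary representations of $G$; by the theorem of Moore \cite{Moo72} characterizing groups whose irreducible unitary representations have uniformly bounded dimension, $G$ must contain an abelian subgroup of finite index.

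\textbf{Main obstacle.} The entire substance of the backward direction is Moore's theorem, which is a deep input and would simply be cited. The only technical step I would need to write with care is the Mackey-style argument in the forward direction: justifying that an irreducible representation of $G$ restricted to $\Z^r$ is concentrated on a single orbit, and bounding the dimension of the extension to $G_\chi$. Both are standard in the finite-index abelian-normal-subgroup setting, but the cocycle issue (the extension of $\chi$ to $G_\chi$ need not exist as an honest character of $G_\chi$) is the one place where one has to be a little careful; it is handled cleanly by passing to the twisted group algebra of the finite group $G_\chi/\Z^r$, whose irreducible representations are automatically of dimension at most $|G_\chi/\Z^r|\leq|D|$.
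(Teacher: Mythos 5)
Your proposal is correct, and it matches the paper's treatment: the paper gives no proof of this statement, simply citing \cite{Moo72} and \cite{Dix77}, and your argument is exactly the standard one those citations encode — elementary estimates for separability/countability, the Mackey normal subgroup analysis (with the projective-representation caveat you correctly flag) for subhomogeneity, and Moore's bounded-degree theorem for the converse. Nothing to correct.
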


When $G$ is finitely generated and virtually abelian, $\wh{G}\cong\widehat{C^*(G)}\cong$ \par\noindent$ \Prim(C^*(G))$. Throughout the paper, we will use $\wh{G}$, $\widehat{C^*(G)}$, and $\Prim(C^*(G))$ interchangeably.

\subsection{Covering dimension.}
In this subsection, we present some results on covering dimension which will be used in the sequel. For a definition and important properties, we refer the reader to \cite{Pea75}. Recall that a topological space $X$ is called \emph{totally normal ($T_5$)} if every subspace of $X$ is normal.
\begin{prop}[Theorem 6.4. \cite{Pea75}]\label{monot_covering_dim}
    Let $X$ be a totally normal space and $Y\subseteq X$. Then $\dim(Y)\leq \dim(X)$.
\end{prop}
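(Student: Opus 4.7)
My strategy is the standard extend-refine-restrict argument for covering dimension. Let $n = \dim(X)$ and take an arbitrary finite open cover $\{V_1, \dots, V_k\}$ of $Y$. Write each $V_i = U_i \cap Y$ with $U_i$ open in $X$, and set $U := U_1 \cup \cdots \cup U_k$, an open subset of $X$ containing $Y$ on which $\{U_i\}$ is a finite open cover. If I can produce a finite open refinement $\{W_\alpha\}$ of $\{U_i\}$ in $U$ of order at most $n+1$, then $\{W_\alpha \cap Y\}$ will be a finite open refinement of $\{V_1, \dots, V_k\}$ in $Y$ of the same order, yielding $\dim(Y) \leq n$.

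The substantive content of the proof is therefore the auxiliary claim that $\dim(U) \leq \dim(X)$ for every open subset $U$ of the totally normal space $X$. To establish this, I would use that in a totally normal space every open set admits an $F_\sigma$-decomposition $U = \bigcup_{m \in \N} F_m$ with each $F_m$ closed in $X$. Monotonicity of covering dimension on closed subsets of a normal space gives $\dim(F_m) \leq \dim(X)$ for every $m$; this follows from the familiar trick of extending a finite open cover of $F_m$ to $X$ by appending $X \setminus F_m$, refining in $X$, and restricting back to $F_m$. The countable sum theorem for covering dimension in normal spaces (applied to the family $\{F_m\}$ viewed inside the normal subspace $U$) then upgrades the individual bounds to $\dim(U) \leq \dim(X)$.

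With the auxiliary claim in hand, the first paragraph produces $\{W_\alpha\}$ from applying $\dim(U) \leq n$ to the cover $\{U_i\}$, and intersecting with $Y$ completes the proof. The main obstacle I anticipate is ensuring that the hypothesis ``totally normal'' is packaged correctly so that it supplies both the $F_\sigma$ decomposition of open sets and the normality required to invoke the countable sum theorem on $U$. These facts are set up precisely for this purpose in Pears' framework \cite{Pea75}, so the proof ultimately amounts to threading together this chain of standard auxiliary results; everything else is a routine manipulation of open covers, refinements, orders, and intersections with $Y$.
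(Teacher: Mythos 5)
The paper offers no proof of this proposition at all: it is quoted directly from Pears \cite{Pea75} (Theorem 6.4), so there is no in-paper argument to compare yours against. Judged on its own terms, your proof has one genuine gap, and it sits exactly where you flagged your main worry: the assertion that in a totally normal space every open set admits an $F_\sigma$-decomposition. That property is the \emph{definition} of perfect normality, and it is not implied by the paper's definition of totally normal (``every subspace is normal,'' i.e.\ hereditary normality). Concretely, the ordinal space $X=[0,\omega_1]$ is hereditarily normal (it is a linearly ordered, hence monotonically normal, space), yet the open subset $[0,\omega_1)$ is not $F_\sigma$ in $X$: any closed subset of $X$ missing $\omega_1$ is contained in some $[0,\alpha]$ with $\alpha<\omega_1$, and a countable union of such sets remains bounded because $\omega_1$ has uncountable cofinality. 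Dowker's notion of ``totally normal,'' which is the hypothesis Pears actually uses in Theorem 6.4, only guarantees that each open subspace is covered by a family of open $F_\sigma$-sets of $X$ that is locally finite in the subspace --- not that the subspace is itself $F_\sigma$ --- and Pears' proof accordingly runs your argument with the countable sum theorem replaced by the locally finite sum theorem. As written, your argument proves the subset theorem for perfectly normal spaces, not for totally normal ones.

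Everything else is sound and is the standard route: the reduction from an arbitrary subset $Y$ to the open neighborhood $U=\bigcup_i U_i$, the closed-subset monotonicity obtained by appending $X\setminus F_m$ to an extended cover, and the application of the countable sum theorem inside the normal subspace $U$ are all correct. Moreover, the two spaces to which the paper actually applies the proposition are metrizable --- $\wh{L}_{1D}\cong\T^r\times F$ explicitly, and $\te{Prim}_{\mt{K}}(C^*(G))$ as a subspace of the Hausdorff quotient of the compact metrizable space $\te{Rep}_{\mt{K}}(C^*(G))$ --- hence perfectly normal, so your argument does cover every instance the paper needs. To prove the statement at the stated level of generality, either strengthen the hypothesis to perfect normality or replace ``$U$ is $F_\sigma$'' by the locally finite family of open $F_\sigma$-sets supplied by Dowker's definition and invoke the locally finite sum theorem in place of the countable one.
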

\begin{prop}[Chapter 9, Proposition 2.16, \cite{Pea75}]\label{dim_of_quotient}
     Let $X$, $Y$ be paracompact, normal topological spaces and $f:X\rightarrow Y$ be a continuous open surjection such that $f^{-1}(y)$ is finite for every $y\in Y$. Then $\dim(X)=\dim(Y)$.
 \end{prop}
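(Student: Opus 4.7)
The plan is to prove the two inequalities $\dim(X) \leq \dim(Y)$ and $\dim(Y) \leq \dim(X)$ separately, with both arguments driven by a local ``sheeting'' lemma describing how $X$ sits over $Y$. For each $y \in Y$, write $f^{-1}(y) = \{x_1, \dots, x_m\}$; normality of $X$ lets me separate these finitely many points by pairwise disjoint open neighborhoods $W_i \ni x_i$, and openness of $f$ makes $V := \bigcap_{i=1}^{m} f(W_i)$ an open neighborhood of $y$. After further shrinking $V$ (using finiteness of the fiber together with continuity of $f$ to push points of $X \setminus \bigsqcup_i W_i$ away from a small enough $V$), I obtain a decomposition $f^{-1}(V) = \bigsqcup_i U_i$ with $U_i := W_i \cap f^{-1}(V)$, and each restriction $f|_{U_i} \colon U_i \to V$ is again an open continuous surjection with finite fibers.

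For the inequality $\dim(Y) \leq \dim(X)$, I would pull back a finite open cover $\mc V$ of $Y$ to $f^{-1}(\mc V)$ on $X$, refine it to order at most $\dim(X)+1$, and then push the refinement forward to $Y$ using the sheeting so that each element is supported on a single sheet, which controls the order of the resulting cover at the target. For $\dim(X) \leq \dim(Y)$, I would use the sheeting to write $X$ locally as a finite disjoint union of sheets, each continuously, openly, and surjectively mapped onto an open subset of $Y$ with finite fibers; iterating the sheeting (and stratifying $Y$ using the lower semicontinuity of the fiber size function $y \mapsto |f^{-1}(y)|$, which follows from openness of $f$) reduces to sheets on which $f|_{U_i}$ is a local homeomorphism, in which case $\dim U_i = \dim f(U_i) \leq \dim Y$. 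Global assembly in both directions uses paracompactness of the target together with the sum theorem for covering dimension in paracompact normal spaces.

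The hard step is the local sheeting itself, specifically the shrinking of $V$ so that $f^{-1}(V) \subseteq \bigsqcup_i W_i$: this is where finite-fiberness, openness, and normality have to be combined in a nontrivial way, and it is also where my inductive reduction on residual fiber size needs to be managed with care since the fibers are assumed only finite, not uniformly bounded. Once this sheeting is in hand, both inequalities reduce to standard covering-dimension bookkeeping of the kind carried out in Pears's Chapter~9.
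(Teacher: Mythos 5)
You should first note that the paper does not actually prove this proposition; it is quoted verbatim from Pears (Chapter 9, Proposition 2.16 of \cite{Pea75}), so there is no internal argument to compare against. Judged on its own terms, your proposal has a decisive gap: the local ``sheeting'' lemma on which both inequalities rest is false under the stated hypotheses. Take $X=\bigl([0,1]\times\{0\}\bigr)\cup\bigl((0,1]\times\{1\}\bigr)$ with the subspace topology from $\rn^2$, $Y=[0,1]$, and $f$ the first-coordinate projection. Then $X$ and $Y$ are metrizable (hence paracompact and normal), $f$ is continuous and surjective with fibers of cardinality $1$ or $2$, and $f$ is open (the image of an open subset of either branch is open in $[0,1]$). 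Yet at $y=0$ the fiber is the single point $(0,0)$, every sufficiently small neighborhood $W_1$ of $(0,0)$ lies in the lower branch, and for \emph{every} neighborhood $V$ of $0$ the preimage $f^{-1}(V)$ contains points $(t,1)$ of the upper branch, so $f^{-1}(V)\not\subseteq W_1$. No shrinking of $V$ repairs this: the step you yourself flag as the hard one is not merely hard but impossible in general. The obstruction is exactly the failure of local constancy of $y\mapsto |f^{-1}(y)|$; that function is only lower semicontinuous, and the sheeting holds only where it is locally constant (e.g.\ for free actions of finite groups, which is the situation the paper later handles separately in Proposition \ref{actions with finite groups give local isom}, and which is what actually occurs in the application to $q:N_{\mt{K}}\to N_{\mt{K}}/D_1$).

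Your proposed rescue --- stratify $Y$ by fiber cardinality and sum --- does not close the gap as described. The strata are only locally closed, the points where the sheeting fails are precisely the boundary points of the strata, and invoking the sum theorem there requires either a countable closed (or $F_\sigma$) decomposition together with monotonicity of $\dim$ on arbitrary subspaces, which needs total normality rather than normality (compare Proposition \ref{monot_covering_dim}). Separately, even where a sheeting exists, pushing a low-order refinement forward sheet by sheet does not control the order of the resulting cover of $Y$, since distinct sheets over the same point contribute overlapping images; the inequality $\dim(Y)\leq\dim(X)$ for open finite-to-one maps is the genuinely delicate classical direction and is not obtained by this bookkeeping. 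To repair the write-up you should either restrict to the case actually needed (quotients by free actions of finite groups, where $f$ is a covering-type local homeomorphism and both inequalities do reduce to local homeomorphism invariance plus a sum theorem) or follow the partition/separator arguments of \cite{Pea75} for the general statement.
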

 
The following result is known to experts, but we present a proof for the sake of completion.

 \begin{lemma}\label{non_empty_int_and_dimension}
     Suppose $X= \TT^n\times F$ for some $n\in \N$ where $\TT^n$ and $F$ a finite set. We endow $\TT^n$ $F$ with the discrete metric and $X$ with the product metric. If $U\subseteq X$ has non-empty interior, then $\dim(U)=n$.
     \begin{proof}
         Let $x\in U^{\circ}.$ Then there exists $\varepsilon>0$ such that $B(x,\varepsilon)\subseteq U \subseteq X.$ But for small enough $\varepsilon$, $B(x,\varepsilon)$ is homeomorphic to the unit ball (in $r$-dimensions). So, $\dim(B(x,\varepsilon))=r$. The result follows from the fact that $X$ is a metric space (hence totally normal) and Proposition \ref{monot_covering_dim}.
     \end{proof}
 \end{lemma}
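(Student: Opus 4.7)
The plan is to combine the local Euclidean structure of $X$ with the monotonicity of covering dimension on metric spaces, which is already recorded as Proposition~\ref{monot_covering_dim}. Since $F$ is finite with the discrete topology, $X = \TT^n \times F$ is a finite disjoint union of copies of $\TT^n$; in particular $X$ is a compact metrizable $n$-manifold, and it is well known that $\dim(\TT^n)=n$, giving $\dim(X)=n$.

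First, pick any point $x$ in the interior $U^\circ$, which is non-empty by hypothesis. Choose $\varepsilon>0$ small enough that the open metric ball $B(x,\varepsilon)$ is contained in $U$ and also lies inside a single Euclidean chart of the torus factor of $X$. Then $B(x,\varepsilon)$ is homeomorphic to an open Euclidean $n$-ball, so $\dim(B(x,\varepsilon))=n$ by a standard computation for Euclidean open sets.

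Next, observe that $X$ is metrizable (as a finite product of metrizable spaces), hence every subspace of $X$ is metrizable and therefore normal, so $X$ is totally normal. Applying Proposition~\ref{monot_covering_dim} to the chain $B(x,\varepsilon) \subseteq U \subseteq X$ yields
\[n = \dim(B(x,\varepsilon)) \leq \dim(U) \leq \dim(X) = n,\]
and hence $\dim(U)=n$, as desired.

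There is no real obstacle here; the whole argument is a short local-to-global dimension computation. The only point one must not gloss over is why covering dimension is actually monotone on arbitrary subspaces, which is exactly why the statement is framed inside the metrizable (equivalently, totally normal) ambient space $X$, so that Proposition~\ref{monot_covering_dim} applies without further hypotheses.
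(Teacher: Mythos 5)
Your proof is correct and follows essentially the same route as the paper's: pick a point of $U^{\circ}$, find a small ball homeomorphic to a Euclidean $n$-ball, and sandwich $\dim(U)$ between $\dim(B(x,\varepsilon))$ and $\dim(X)$ using total normality of the metrizable space $X$ and Proposition~\ref{monot_covering_dim}. You even spell out the upper bound $\dim(U)\leq\dim(X)=n$ more explicitly than the paper does, and you silently correct the statement's typo ($r$ should read $n$).
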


 \subsection{Nuclear dimension.}
The notion of the {\em nuclear dimension} was introduced by Winter and Zacharias in \cite{WZ10}. In that paper, they showed that $\dimnuc (C(X))=\dim(X)$ for every locally compact second countable Hausdorff space $X$. In this sense, nuclear dimension can be viewed as a non-commutative analog of the covering dimension. 

We refer the reader to \cite{WZ10} for the precise definition and basic properties of nuclear dimension. \par
In this paper, we are interested in computing the nuclear dimension on the setting of subhomogeneous $C^*$-algebras. For such $C^*$-algebras, Winter has shown that it is connected with the dimensions of the spaces of $k$-dimensional irreducible representations.
\begin{thm}[cf. Main Theorem, \cite{Win04}]\label{dim_nuc_subhomogeneous}
 Let $A$ be a separable subhomogeneous $C^*$-algebra. Then 
\[\dimnuc (A)=\max_{i\in \N}\{\dim \textnormal{Prim}_i(A)\}.\]
 \end{thm}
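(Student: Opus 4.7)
The plan is to exploit the natural stratification of a separable subhomogeneous $C^*$-algebra by the dimension of irreducible representations, reduce to computing $\dim_{nuc}$ on the $k$-homogeneous subquotients, and then carefully combine approximations across strata so that the total order is the maximum rather than the sum.

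First I would set up the canonical composition series. Since $A$ is subhomogeneous, there is $M\in\N$ bounding the dimension of every irreducible representation. Define the ideals
\[
I_k=\bigcap\{\ker\pi\,\colon\,\pi\in\Irr(A),\ \dim\pi>k\},\qquad k=0,1,\dots,M,
\]
so that $0=I_0\subseteq I_1\subseteq\cdots\subseteq I_M=A$. The subquotient $A_k:=I_k/I_{k-1}$ is $k$-homogeneous, its spectrum canonically identifies with $\te{Prim}_k(A)$, and by the standard structure theory of homogeneous $C^*$-algebras it is the section algebra of a locally trivial $M_k$-bundle over $\te{Prim}_k(A)$; in particular, locally on $\te{Prim}_k(A)$ it is isomorphic to $C_0(U,M_k)$.

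Next I would compute $\dim_{nuc}(A_k)=\dim\te{Prim}_k(A)$. Winter and Zacharias establish $\dim_{nuc}(C_0(X))=\dim(X)$ in \cite{WZ10}; tensoring with $M_k$ does not change nuclear dimension, so on each trivializing open set $U\subseteq\te{Prim}_k(A)$ one has the desired equality. One then glues these local computations via partitions of unity on $\te{Prim}_k(A)$ subordinate to a cover of multiplicity at most $\dim\te{Prim}_k(A)+1$ to obtain a $\dim\te{Prim}_k(A)$-decomposable c.p.\ approximation for $A_k$. The lower bound $\dim_{nuc}(A_k)\geq\dim\te{Prim}_k(A)$ is immediate from the local form $C_0(U,M_k)$ together with monotonicity of nuclear dimension under ideals and quotients.

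This immediately yields $\dim_{nuc}(A)\geq\max_k\dim\te{Prim}_k(A)$, since each $A_k$ is a subquotient of $A$ and nuclear dimension is monotone with respect to such subquotients. The main obstacle is the reverse inequality: the naive bound from the extension $0\to I_{k-1}\to I_k\to A_k\to 0$ gives only $\dim_{nuc}(I_k)\leq \dim_{nuc}(I_{k-1})+\dim_{nuc}(A_k)+1$, which would accumulate across layers and yield only a sum. To avoid this, I would construct c.p.\ approximations $(\psi_\lambda,\varphi_\lambda)$ on all of $A$ simultaneously in which the index set decomposes into $n+1$ color classes, $n=\max_k\dim\te{Prim}_k(A)$, using a single partition of unity on $\widehat{A}$ that is adapted to the stratification by dimension. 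The technical point is to choose lifts of matrix units from $A_k$ back into $I_k\subseteq A$ compatibly with the partition of unity on the lower strata, so that the approximating maps on different strata can share color classes rather than consuming new ones. This is exactly the content of Winter's argument in \cite{Win04}, and the hardest step is verifying that these coherent lifts preserve approximate multiplicativity in the presence of the lower-dimensional layers.
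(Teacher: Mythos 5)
This statement is not proved in the paper at all: it is quoted from Winter's work (the ``cf.\ Main Theorem, [Win04]'' attribution), with the translation from Winter's original formulation to the nuclear--dimension statement delegated to [BL24, Thm.~2.6]. So your proposal has to be measured against Winter's own argument, and as it stands it has two genuine gaps. First, your composition series is set up incorrectly. With $I_k=\bigcap\{\ker\pi\,\colon\,\pi\in\Irr(A),\ \dim\pi>k\}$, the subquotient $I_k/I_{k-1}$ need \emph{not} be $k$-homogeneous and its spectrum need not be $\te{Prim}_k(A)$. Take $A=\{f\in C([0,1],M_2)\,\colon\, f(0)\in\C 1_2\}$: here $\te{Prim}_2(A)=(0,1]$ and $\te{Prim}_1(A)=\{0\}$, but $I_1=\{f\,\colon\, f|_{(0,1]}=0\}=0$ by continuity, so $I_1/I_0=0$ misses the one-dimensional representation entirely, while $I_2/I_1=A$ is not $2$-homogeneous. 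The issue is that $\{\dim\le k\}$ is closed (not open) in $\te{Prim}(A)$, so $\te{Prim}_k(A)=\{\dim\ge k\}\cap\{\dim\le k\}$ is only locally closed; the correct filtration is the \emph{decreasing} one by the ideals $A_{\ge k}$ corresponding to the open sets $\{\dim\ge k\}$, whose successive quotients $A_{\ge k}/A_{\ge k+1}$ are $k$-homogeneous with spectrum $\te{Prim}_k(A)$. This is repairable, and your lower bound $\dim_{nuc}(A)\ge\max_k\dim\te{Prim}_k(A)$ goes through once the stratification is fixed (subquotient monotonicity of $\dim_{nuc}$ plus $\dim_{nuc}(C_0(X)\otimes M_k)=\dim X$ from [WZ10]).

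The second gap is the serious one: the upper bound is the entire content of the theorem, and your proposal does not prove it. You correctly diagnose that the naive estimate for extensions only yields $\dim_{nuc}(A)\le\sum_k(\dim\te{Prim}_k(A)+1)-1$, and you correctly describe \emph{what} would need to be done to do better (a single partition of unity adapted to the stratification, with lifts of approximating maps on the higher strata chosen coherently with the lower strata so that colour classes are shared). But describing the required construction and then writing ``this is exactly the content of Winter's argument in [Win04]'' is a citation, not a proof; the approximate multiplicativity of the coherently lifted maps across strata is precisely the hard technical core of [Win04] (carried out there for decomposition rank, which dominates nuclear dimension, so the upper bound does transfer). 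If your intent was to reprove the theorem, that step must actually be executed; if your intent was to justify quoting it, the paper's own treatment --- cite [Win04] and [BL24] --- is the appropriate one, and the sketch adds an incorrect stratification it does not need.
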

We remark the statement of the Main Theorem in \cite{Win04} is slightly different than presented here. For the exact statement, see \cite[Thm. 2.6]{BL24}). \par

It is already known that $\dimnuc (C^*(G))\leq h(G)$ for every finitely generated virtually abelian group $G$ \cite[Prop. 2.14]{BL24}.\footnote{Actually, this result is stated in terms of the asymptotic dimension, $\te{asdim}\,(G)$. However, $\te{asdim}\,(G)=h(G)$ for every finitely generated virtually abelian group $G$ by \cite[Thm. 3.5]{asymp_dim_va_groups}.} Our main result (Theorem \ref{main_result_2}) shows that equality holds.

\section{Results on Orbits and Stabilizers of Virtually abelian groups}
\label{sec:orbs-stabs}

In this section, we fix a finitely generated virtually abelian group $G$ of rank $r$. Using the extension from \eqref{eqn:virtually_abelian_defn} as the framework, we conflate $\Z^r$ with the image $i(\Z^r)\leq G$; similarly, we conflate $D$ with the finite quotient group $G/\Z^r$, and take $s\colon G\to D$ as the natural projection map. We focus on the centralizer subgroup $L:=C_G(\Z^r)$ for constructing a topological space $N_{\mt{K}}/D_1$ of dimension $r$. This subgroup $L$ will then act as our primary foothold to connect the nuclear dimension of $C^*(G)$ with the Hirsch length $h(G)=r$. 

\subsection{Centralizer of $\zn^r$ in $G$.}\label{subsec:centralizer} The conjugation action $G\curvearrowright \Z^r$ admits the centralizer subgroup $L$ as its kernel; thus, $L$ is a finite index normal subgroup of $G$ containing $\Z^r$.

Define the finite quotient groups $D_0:=L/\Z^r$ and $D_1:=G/L$, denote by $i$ the inclusion maps, and let $s_0$, $s_1$ be the natural projection maps provided in the following exact sequences
\begin{equation}
    \label{eqn:defining_L}
     1\rightarrow \Z^r\stackrel{i}{\rightarrow} L\stackrel{s_0}{\rightarrow}D_0\rightarrow 1\quad\te{ and }\quad     1\rightarrow L\stackrel{i}{\rightarrow} G\stackrel{s_1}{\rightarrow}D_1\rightarrow 1.
\end{equation}

\noindent Furthermore, we set $\mt{K}:=|D_1|$ and define
\[\wh{L}_{1D}:=\te{Hom}\,(L,\T)\] 
as the subspace of the 1-dimensional representations (or characters) of $L$. Notice that $\wh{L}_{1D}\cong \widehat{L_{ab}}$ where $L_{ab}=L/[L,L]$ is the
quotient by the commutator subgroup $[L,L]$ of $L$. Endowing $\wh{L_{ab}}$ with the metric induced by the topology of pointwise convergence, we can view $\wh{L}_{1D}$ as a compact metric space.  \par

The first extension in (\ref{eqn:defining_L}) is a central extension, which implies $L$ is a BFC group.\footnote{BFC stands for \textit{boundedly finite class of conjugate elements}.} That is, there exists $d\in\zn_{>0}$ such that no element of $L$ has more than $d$ conjugates. Indeed, observe that $\zn^r\leq C_G(x)$, for each $x\in L$. By the orbit-stabilizer theorem, the size of the conjugacy class of $x$ is $[G:C_G(x)]\leq [G:\Z^r]=|D|$.\par
 Since $L$ is a BFC group, a result of B. H. Neumann (see for example \cite[14.5.11]{robinson_group_theory_book}) implies that $[L,L]$ is finite.

\begin{ex}\label{example:crystallographic_group_defn}
Notice that the action $G\curvearrowright \Z^r$ is faithful if and only if $L=C_G(\Z^r)=\Z^r$ if and only if $\Z^r$ is maximally abelian in $G$. If any of these equivalent conditions hold, we say that $G$ is a {\em crystallographic group} of dimension $r$. This class of groups is a well-studied object and is of independent interest to the fields of physics and chemistry. Crystallographic groups include the 17 two-dimensional wallpaper groups and the 230 three-dimensional space groups (219 up to abstract group isomorphism). See \cite{Hil86} for an elementary mathematical introduction. \par
\end{ex}

 \subsection{Extension of characters.}\label{subsec:extension_of_characters}
We continue the section with a result on extension of characters. In particular, we will show that every character of $\Z^r$ extends to a character of $L$. \par

\begin{lemma}\label{basic_lemma_11}
     Let $\{e_1,...,e_r\}$ be a $\Z$-basis of $\Z^r$, treated as a multiplicative group. For each $x\in\Z^r$, denote with $\bar{x}$ the image of $x\in \Z^r\leq L$ onto $L_{ab}$. Then $\{\bar{e_1},...,\bar{e_r}\}$ is $\Z$-linearly independent in $L_{ab}$.
     \begin{proof}
         Take integers $a_1,...,a_r$ such that $\bar{x}=\bar{e_1}^{a_1}\cdots\bar{e_r}^{a_r}=1_{L_{ab}}$, i.e., $x=e_1^{a_1}\cdots e_r^{a_r}\in [L,L]$. Since $|[L,L]|<\infty$, $x$ has finite order. But $x$ is also an element of $\Z^r$, so it must be that $e_1^{a_1}\cdots e_r^{a_r}=x=1_{\zn^r}$. Hence, it follows that $a_1=a_2=\cdots=a_r=0$.
     \end{proof}
 \end{lemma}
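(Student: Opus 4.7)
The plan is to run a clean contradiction (or equivalently, a direct argument) that exploits the two facts the paper has already set up: the centralizer $L = C_G(\Z^r)$ is BFC because $\Z^r$ is central of finite index in $L$, and consequently the commutator subgroup $[L,L]$ is finite by B.\ H.\ Neumann's theorem.

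Concretely, I would begin by assuming we have integers $a_1,\ldots,a_r$ with $\bar{e_1}^{a_1}\cdots\bar{e_r}^{a_r} = 1_{L_{ab}}$ and set $x := e_1^{a_1}\cdots e_r^{a_r} \in \Z^r \leq L$. Unwinding what $L_{ab} = L/[L,L]$ means, the vanishing in $L_{ab}$ says precisely that $x \in [L,L]$. Now the leverage comes from two competing properties of $x$: as an element of the finite group $[L,L]$ it must have finite order, while as an element of $\Z^r$ it lies in a torsion-free group. The only way to reconcile these is $x = 1_{\Z^r}$. Since $\{e_1,\ldots,e_r\}$ is a genuine $\Z$-basis of $\Z^r$, this forces $a_1 = \cdots = a_r = 0$, which is exactly $\Z$-linear independence of the images $\bar{e_1},\ldots,\bar{e_r}$ in $L_{ab}$.

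The only real content to verify is the finiteness of $[L,L]$, but the paper has already handed us this for free in the paragraph just before the lemma: the first short exact sequence in (\ref{eqn:defining_L}) is a central extension with finite quotient $D_0$, hence every $x \in L$ has at most $|D_0| \leq |D|$ conjugates in $G$, so $L$ is BFC and Neumann's theorem applies. So there is no real obstacle here; the lemma is a short deduction whose only subtlety is making sure the reader sees that ``finite order in $[L,L]$'' combined with ``lives inside the torsion-free group $\Z^r$'' collapses $x$ to the identity. This is a setup lemma whose payoff comes later, when one wants to extend characters of $\Z^r$ through $L_{ab}$ to all of $L$, which presumably is the content of the next result in Subsection~\ref{subsec:extension_of_characters}.
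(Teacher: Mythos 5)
Your argument is exactly the paper's: set $x=e_1^{a_1}\cdots e_r^{a_r}$, note that vanishing in $L_{ab}$ puts $x$ in the finite group $[L,L]$, so $x$ has finite order, hence $x=1$ by torsion-freeness of $\Z^r$, and the basis property forces $a_1=\cdots=a_r=0$. The proposal is correct and takes essentially the same route, with the only addition being an explicit recap of why $[L,L]$ is finite, which the paper establishes in the paragraph preceding the lemma.
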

 
\begin{lemma}\label{basic_lemma_12}
     Let $A$ be a finitely generated abelian group and $H\leq A$ a subgroup. Then every character $\chi\in \widehat{H}$ can be extended to a character $\widetilde{\chi}\in \widehat{A}$.
     \begin{proof}
        This follows from \cite[Cor. 3.6.2]{principles_of_harm_anal_book}. \qedhere
         
     \end{proof}
    
 \end{lemma}

\begin{prop}\label{extension_of_char}
    Every $\chi\in \widehat{\Z}^r$ can be extended to a character $\widetilde{\chi}\in \wh{L}_{1D}$.
    \begin{proof}
        Let $\chi\in \widehat{\Z}^r$ and fix $\{e_1,e_2,...,e_r\}$ as a $\Z$-basis of $\Z^r$. Let $H\leq L_{ab}$ be the subgroup generated by $\{\bar{e_1},\bar{e_2},...,\bar{e_r}\}$. Lemma \ref{basic_lemma_11} implies that $\{\bar{e_1},\bar{e_2},...,\bar{e_r}\}$ is $\Z$-linearly independent, and we see that $H$ has finite index in $L_{ab}$. Define
        \[\chi_H:H\rightarrow \TT\quad \te{via}\quad \chi_H(\bar{e_i})=\chi(e_i).\]
        Notice that $\chi_H$ is a character, so by Lemma \ref{basic_lemma_12} it can be extended to a character $\chi_{L_{ab}}:L_{ab}\rightarrow \TT$. Finally, $\chi_{L_{ab}}$ induces a map $\widetilde{\chi}\in \wh{L}_{1D}$. To finish the proof, observe $\widetilde{\chi}(e_i)=\chi_{L_{ab}}(\bar{e_i})=\chi(e_i)$, as desired.
    \end{proof}
\end{prop}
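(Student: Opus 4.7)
The plan is to pass to $L_{ab} = L/[L,L]$ and build the extension there, exploiting the two lemmas that immediately precede the statement. Since $L$ is a finite-index subgroup of the finitely generated group $G$, $L$ is itself finitely generated, and hence so is the abelian group $L_{ab}$. This puts us in the setting of Lemma \ref{basic_lemma_12}.

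First I would fix a $\Z$-basis $\{e_1,\dots,e_r\}$ of $\Z^r$ and consider the subgroup $H \leq L_{ab}$ generated by $\{\bar e_1,\dots,\bar e_r\}$. By Lemma \ref{basic_lemma_11}, these images are $\Z$-linearly independent, so $H$ is free abelian of rank $r$ and the assignment $\chi_H(\bar e_i) := \chi(e_i)$ unambiguously defines a character $\chi_H : H \to \T$. Next I would apply Lemma \ref{basic_lemma_12} to the pair $H \leq L_{ab}$ to obtain an extension $\chi_{L_{ab}} \in \widehat{L_{ab}}$ with $\chi_{L_{ab}}|_H = \chi_H$. Composing with the canonical projection $L \twoheadrightarrow L_{ab}$ produces the desired $\widetilde{\chi} \in \wh{L}_{1D}$, and I would close by verifying on basis elements that $\widetilde{\chi}(e_i) = \chi_{L_{ab}}(\bar e_i) = \chi_H(\bar e_i) = \chi(e_i)$, so $\widetilde{\chi}|_{\Z^r} = \chi$ as required.

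The main obstacle, really, is the well-definedness of $\chi_H$: a priori some nontrivial word in the $e_i$ could map to the identity in $L_{ab}$, in which case we could not prescribe its values freely. This is exactly what Lemma \ref{basic_lemma_11} rules out, and that lemma in turn crucially uses the finiteness of $[L,L]$ derived from the BFC property of $L$ via Neumann's theorem. Once that linear-independence step is in hand, the remainder of the argument is a routine combination of the classical duality fact that characters extend from subgroups of (finitely generated) abelian groups (Lemma \ref{basic_lemma_12}) together with pulling back along $L \twoheadrightarrow L_{ab}$, and no further technicalities should arise.
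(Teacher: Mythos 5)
Your proposal is correct and follows essentially the same route as the paper's own proof: pass to $L_{ab}$, use Lemma \ref{basic_lemma_11} for well-definedness of $\chi_H$ on the subgroup generated by the $\bar{e_i}$, extend via Lemma \ref{basic_lemma_12}, and pull back along $L\twoheadrightarrow L_{ab}$. Your remark isolating well-definedness as the key point is a fair reading of why Lemma \ref{basic_lemma_11} is needed.
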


\subsection{Maximal orbits.} 
We now investigate the topology of the set of characters of $L$
with maximal orbits. To begin, we prove that all stabilizer subgroups of $G$ under the action $G\curvearrowright \Z^r$ contain $L=C_G(\zn^r)$ as a normal subgroup.

\begin{lemma}\label{stabilizers_in_virtually_abelian_case}
  Let $\psi\in \TT^r\cong \widehat{\Z^r}$. Then $G_{\psi}\geq L$ with equality if and only if the orbit $\mc{O}_{\psi}$ has order $\mt{K}$.
  \begin{proof}
      To show that $G_{\psi}\geq L$, we prove $g\cdot \psi=\psi$ for all $g\in L$.
      
     For any $g\in L=C_G(\zn^r)$ and $a\in \Z^r$,
      \[(g\cdot \psi)(a)=\psi(g^{-1}ag)=\psi(a).\]
     Thus, $g\in G_{\psi}$. \par
     Further,
    \[|\mc{O}_{\psi}|=[G:G_{\psi}]\leq [G:L]=|D_1|=\mt{K}\]
      So, 
      \[|\mc{O}_{\psi}|=\mt{K}\Longleftrightarrow [G:G_{\psi}]=[G:L]\Longleftrightarrow G_{\psi}=L.\qedhere\]
  \end{proof}
\end{lemma}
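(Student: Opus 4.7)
My plan is to prove the two assertions separately and essentially directly from the definitions, using only the definition of $L=C_G(\zn^r)$, the induced action on $\wh{\zn}^r$, and the orbit-stabilizer theorem.

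First I would establish the inclusion $L\leq G_\psi$. Fix an arbitrary $g\in L$ and $a\in \zn^r$. Because $g$ centralizes $\zn^r$ by the very definition of $L$, we have $g^{-1}ag=a$, and therefore
\[(g\cdot \psi)(a)=\psi(g^{-1}ag)=\psi(a).\]
Since $a$ was arbitrary, $g\cdot \psi=\psi$, i.e.\ $g\in G_\psi$. This gives $L\leq G_\psi$ for every $\psi\in\wh{\zn^r}=\T^r$.

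For the equivalence, I would invoke orbit-stabilizer: $|\mc{O}_\psi|=[G:G_\psi]$. Combined with the chain $L\leq G_\psi\leq G$ and Lagrange, this yields
\[|\mc{O}_\psi|=[G:G_\psi]\leq [G:L]=|D_1|=\mt{K},\]
with equality in the leftmost inequality exactly when $[G:G_\psi]=[G:L]$. Because $L$ is contained in $G_\psi$ and both subgroups have finite index in $G$, equality of the indices forces $G_\psi=L$. Conversely, if $G_\psi=L$, the index computation gives $|\mc{O}_\psi|=\mt{K}$ directly.

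There is no serious obstacle here; the only subtle point is ensuring that $L\leq G_\psi$ plus equality of finite indices indeed implies equality of the subgroups, which is just the standard fact that a finite-index subgroup contained in another finite-index subgroup with the same index must coincide with it. I would briefly remark on this to be explicit. Everything else is a straightforward unwinding of the definitions already laid out in the preceding paragraphs.
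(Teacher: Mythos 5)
Your proposal is correct and follows essentially the same route as the paper: the inclusion $L\leq G_\psi$ via the centralizer property, then orbit-stabilizer plus the index comparison $[G:G_\psi]\leq[G:L]$ to get the equivalence. The only difference is that you make explicit the standard fact that nested finite-index subgroups with equal index coincide, which the paper leaves implicit.
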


We now introduce the topological space which lies at the heart of our argument in Section \ref{sec:lower_bound}. Define $\rho(\chi)=\chi|_{\zn^r}$ for each $\chi\in \wh{L}_{1D}$. The maximal character space in $\wh{L}_{1D}$ is defined as
\begin{equation}\label{eqn:Nk}
N_{\mt{K}}:=\l\{\chi \in \wh{L}_{1D}\,\colon\, G_{\rho(\chi)}=L\r\}.
\end{equation}

\begin{lemma}\label{desired orbit is open}
    $N_{\mt{K}}$ is open in $\wh{L}_{1D}.$
    \begin{proof}
           It enough to show that $\wh{L}_{1D}\backslash N_{\mt{K}}$ is closed.
            Let $\chi_n\to \chi$ with $\chi_n\notin N_{\mt{K}}$. Then $G_{\rho(\chi_n)}\gneq L$ by Lemma \ref{stabilizers_in_virtually_abelian_case}. By \cite[Prop 4.12]{CW24} we have that $G_{\rho(\chi)}\gneq L$. Thus $\chi\notin N_{\mt{K}}$. 
    \end{proof}
\end{lemma}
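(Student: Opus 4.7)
The plan is to show the complement $\widehat{L}_{1D}\setminus N_{\mathtt{K}}$ is closed. Since $L_{ab}$ is a finitely generated abelian group, its Pontryagin dual $\widehat{L}_{1D}\cong \widehat{L_{ab}}$ is a compact metrizable space, so it suffices to argue with sequences. Suppose $\chi_n\to\chi$ in $\widehat{L}_{1D}$ with each $\chi_n\notin N_{\mathtt{K}}$. By Lemma \ref{stabilizers_in_virtually_abelian_case} the inclusion $L\leq G_{\rho(\chi_n)}$ is automatic, so the hypothesis $\chi_n\notin N_{\mathtt{K}}$ is equivalent to $G_{\rho(\chi_n)}\gneq L$. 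The goal is to promote this strict inclusion to the limit, i.e.\ to establish $G_{\rho(\chi)}\gneq L$.

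The main idea is a pigeonhole/upper semi-continuity argument on stabilizers, exploiting that $[G:L]=\mathtt{K}<\infty$. First I would observe that convergence $\chi_n\to\chi$ in $\widehat{L}_{1D}$ is pointwise convergence on $L$, hence also pointwise convergence of the restrictions $\rho(\chi_n)\to\rho(\chi)$ on $\mathbb{Z}^r$. Next, there are only finitely many subgroups of $G$ sitting strictly between $L$ and $G$ (since $|G/L|=\mathtt{K}$), so after passing to a subsequence we may assume that $G_{\rho(\chi_n)}=H$ for a single fixed subgroup $H$ with $L\lneq H\leq G$, independent of $n$.

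Now for any fixed $g\in H$, the condition $g\in G_{\rho(\chi_n)}$ reads
\[
\rho(\chi_n)(g^{-1}ag)=\rho(\chi_n)(a)\quad\text{for every }a\in \Z^r.
\]
Passing to the limit $n\to\infty$ using pointwise convergence of $\rho(\chi_n)$ yields
$\rho(\chi)(g^{-1}ag)=\rho(\chi)(a)$ for every $a\in\Z^r$, i.e.\ $g\in G_{\rho(\chi)}$. Therefore $H\leq G_{\rho(\chi)}$, and since $H\gneq L$ we get $G_{\rho(\chi)}\gneq L$, proving $\chi\notin N_{\mathtt{K}}$.

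The only delicate point is confirming the correct notion of convergence in $\widehat{L}_{1D}$ (pointwise on $L$), which is what lets us conclude pointwise convergence of the restrictions to $\Z^r$; beyond that the argument is soft, driven entirely by the finiteness of $[G:L]$ to reduce a potentially wild fluctuation of stabilizer subgroups to a single choice along a subsequence. This is essentially the content of the cited \cite[Prop.\ 4.12]{CW24}, specialized to the present setting.
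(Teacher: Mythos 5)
Your proof is correct and follows essentially the same route as the paper: show the complement is closed by taking a convergent sequence, invoke Lemma \ref{stabilizers_in_virtually_abelian_case} to identify ``$\chi_n\notin N_{\mt{K}}$'' with ``$G_{\rho(\chi_n)}\gneq L$'', and pass the strict containment to the limit. The only difference is that where the paper outsources that last step to \cite[Prop.\ 4.12]{CW24}, you prove it directly via the pigeonhole on the finitely many subgroups between $L$ and $G$ together with pointwise convergence of the restricted characters --- a valid, self-contained substitute for the citation.
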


We have an action $G\curvearrowright \widehat{L}_{1D}$ defined via $(g\cdot \chi)(a)=\chi(g^{-1}ag)$ for every $\chi\in \wh{L}_{1D}$, $g\in G$ and $a\in L$. For every $g\in L$, we have $(g\cdot \chi)(a)=\chi(g^{-1}ag)=\chi(g^{-1})\chi(a)\chi(g)=\chi(a)$. It follows that the action is trivial on $L$, and thus it induces an action $D_1\curvearrowright \wh{L}_{1D}$. Notice that this induced action can be defined via $(d_1\cdot \chi)(a)=\chi(\gamma_1(d_1)^{-1}a\gamma_1(d_1))$. Here, $\chi\in \wh{L}_{1D}$, $d_1\in D_1$, $a\in L$, and $\gamma_1:D_1\to G$ is any section. \par
Moreover, the action $D_1\curvearrowright \wh{L}_{1D}$ is an action by homeomorphisms.\footnote{This means that it induces a group homomorphism $D_1\to \operatorname{Homeo}(\wh{L}_{1D})$} Indeed, the action induces a continuous map $D_1\times \wh{L}_{1D}\to \wh{L}_{1D}$ via $(g,\chi)\to g\cdot \chi$. It follows that for every $g\in D_1$, the map $\chi\mapsto g\cdot \chi$ is a homeomorphism. \par
We turn our attention to the maximal orbit space of $\wh{L}_{1D}$, i.e., the quotient space $N_{\mt{K}}/D_1$.
We view each orbit as a single point in this quotient space. \par

\begin{rmk}\label{rmk:quotient_map}
Let $q:N_{\mt{K}}\rightarrow N_{\mt{K}}/D_1$ be the quotient map, which is continuous by definition of the quotient topology. We show $q$ is open. Indeed, let $U\subset N_{\mt{K}}$ be open. We observe that, for any $g\in D_1$, $g\cdot U$ is open because the action $D_1\curvearrowright \wh{L}_{1D}$ is by homeomorphisms (see comments below Lemma \ref{desired orbit is open}). Then $D_1\cdot U=\bigcup_{g\in D_1}g\cdot U$ is open as a finite union of open sets. Set $V=q(U)$ and note 
\[D_1\cdot U=\{\chi\in N_{\mt{K}}: q(\chi)\in V\}.\]
Because $D_1\cdot U$ is open and $q$ is a quotient map, $V$ is open. \par
Replacing $U$ by a closed set $F$, an identical argument implies that $q$ is also a closed map. 
\end{rmk}

\begin{lemma}\label{quot_is_hausdorff}
    $N_{\mt{K}}/D_1$ is Hausdorff.
    \begin{proof}
 We use the notation $\chi \sim \chi'$ if and only if $\chi$ and $\chi'$ are on the same orbit. Since $N_{\mt{K}}\sse \wh{L}_{1D}=\te{Hom}(L,\tn)$, $N_{\mt{K}}$ is Hausdorff, and since $N_{\mt{K}}\subset \wh{L}_{1D}\cong \wh{L}_{ab}$ is a metric space, so too is $N_{\mt{K}}\times N_{\mt{K}}$ with the product metric. By \cite[Ex. 2.4.C(c)]{engelking_topology}, it is enough to show that the set $\{(\chi,\psi)\in N_{\mt{K}}\times N_{\mt{K}}\,\colon\,\chi\sim \psi\}$ is closed in $N_{\mt{K}}\times N_{\mt{K}}$.\footnote{$N_{\mt{K}}\times N_{\mt{K}}$ is endowed with the product topology.} Assume that $(\chi_n,\psi_n)\in N_{\mt{K}}\times N_{\mt{K}}$ converges to $(\chi,\psi)$ where $\chi_n\sim \psi_n$ for all $n$. Then $\chi_n\to \chi$ and $\psi_n\to \psi$. Because $\chi_n$ and $\psi_n$ are on the same orbit, there exist $d_n\in D_1$ such that $\chi_n=d_n\cdot \psi_n$. Because $D_1$ is a finite group, we can assume, after passing to a subsequence, that $d_n=d$ for every $n$. Thus $\chi_n=d\cdot \psi_n$. By taking limits as $n\to \infty$ and using the above, we deduce that $\chi=d\cdot \psi$. So, $\chi\sim \psi$ and thus the proof is complete. 
    \end{proof}
\end{lemma}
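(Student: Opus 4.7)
The plan is to use a standard criterion from general topology: if $q \colon X \to X/\!\sim$ is an open quotient map, then $X/\!\sim$ is Hausdorff if and only if the equivalence relation $R = \{(x,y) \in X^2 : x \sim y\}$ is closed in $X \times X$. Since Remark \ref{rmk:quotient_map} already establishes that $q \colon N_{\mt{K}} \to N_{\mt{K}}/D_1$ is open, the problem reduces to showing that the orbit equivalence relation is closed inside $N_{\mt{K}} \times N_{\mt{K}}$.

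To carry this out, I would first observe that $\wh{L}_{1D} = \on{Hom}(L, \TT)$, with the topology of pointwise convergence, is Hausdorff as a subspace of $\TT^L$, and hence so is the open subspace $N_{\mt{K}}$. The orbit equivalence relation decomposes as
\[
R \;=\; \{(\chi,\psi) \in N_{\mt{K}}^2 : \chi \sim \psi\} \;=\; \bigcup_{d \in D_1} \Gamma_d,
\]
where $\Gamma_d := \{(\chi, d\cdot \chi) : \chi \in N_{\mt{K}}\}$ is the graph of the continuous self-map $\chi \mapsto d \cdot \chi$ on $N_{\mt{K}}$. Since $N_{\mt{K}}$ is Hausdorff, each graph $\Gamma_d$ is closed in $N_{\mt{K}} \times N_{\mt{K}}$, and because $D_1$ is a finite group, $R$ is a finite union of closed sets, hence itself closed. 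Combined with openness of $q$, this yields that $N_{\mt{K}}/D_1$ is Hausdorff.

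I do not anticipate any real obstacle here: the argument relies only on the finiteness of $D_1$, the Hausdorffness of the ambient space of characters, and the continuity of the $D_1$-action (which is built into the construction of the action). If one preferred a more hands-on argument avoiding the closed-relation criterion, one could instead take two distinct orbits $D_1 \cdot \chi$ and $D_1 \cdot \psi$ (each a finite set of points of $N_{\mt{K}}$, disjoint by assumption), use Hausdorffness of $N_{\mt{K}}$ to separate these two finite sets by disjoint open neighborhoods $U \supseteq D_1\cdot \chi$ and $V \supseteq D_1\cdot \psi$, shrink them to be $D_1$-invariant by replacing $U$ with $\bigcap_{d\in D_1} d \cdot U$ (and similarly for $V$), and then apply openness of $q$ to push these down to disjoint open neighborhoods of the two orbits in the quotient. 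Both routes are routine; the cleaner one is the closed-relation approach sketched above.
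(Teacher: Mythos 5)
Your proposal is correct and rests on the same topological criterion as the paper's proof (an open quotient map has Hausdorff image if and only if the orbit equivalence relation is closed in the product; the paper cites \cite[Ex. 2.4.C(c)]{engelking_topology} for this, and Remark \ref{rmk:quotient_map} supplies openness of $q$). Where you differ is in how closedness of the relation $R$ is established. The paper argues sequentially: given $(\chi_n,\psi_n)\to(\chi,\psi)$ with $\chi_n\sim\psi_n$, it uses finiteness of $D_1$ to pass to a subsequence along which the witnessing group element $d_n$ is a constant $d$, then takes limits to get $\chi=d\cdot\psi$. You instead write $R=\bigcup_{d\in D_1}\Gamma_d$ as a finite union of graphs of the continuous translation maps $\chi\mapsto d\cdot\chi$, each closed because $N_{\mt{K}}$ is Hausdorff. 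The two arguments use the same two ingredients (finiteness of $D_1$ and continuity of the action), but your decomposition is slightly cleaner and does not rely on sequences, so it would go through verbatim in a non-metrizable setting where one would otherwise need nets; the paper's sequential argument is perfectly valid here since $\wh{L}_{1D}$ is metrizable. Your alternative ``separate the two finite orbits by $D_1$-invariant open sets'' sketch is also sound, though the invariant shrinking should be $\bigcap_{d\in D_1} d\cdot U$ applied after first separating each pair of points from the two orbits; as stated it is a standard routine and poses no difficulty.
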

Our next goal is to examine how ``large" $N_{\mt{K}}$ and $N_{\mt{K}}\slash D_1$ are, which we quantify by their covering dimension. This measurement will be used in Section \ref{sec:lower_bound}.  \par

We begin by showing that $N_{\mt{K}}$ is not empty. As we saw in Section \ref{subsec:extension_of_characters}, characters of $\TT^r$ always extend to characters of $L$. Hence, to prove $N_{\mt{K}}\neq\varnothing$, it is enough to show that there exists $\chi\in \TT^r$ with stabilizer equal to $L$ (equivalently with $\mt{K}$-orbit). Actually, we show that the characters with the above property are dense in $\TT^r$. The following result and its proof are very similar to \cite[Lemma 2.1]{strongly_qd_eckhardt}.

\begin{prop}\label{density}
    $M:=\{\chi\in \TT^r \,\colon\, G_{\chi}=L\}$ is dense in $\TT^r$.
    \begin{proof}
     
 For every $d\in D$, define
\[A_d:=\on{Fix}_{\T^r}(d)=\{\chi\in \TT^r\,\colon\, d\cdot \chi=\chi\}\]
where the action that is involved is $D\curvearrowright \TT^r$.
Recall that $D_0:=L\slash \zn^r=C_G(\zn^r)\slash \zn^r$. We will show that for every $d\in D\setminus D_0$, $A_d^{\circ}=\varnothing.$ We note that $A_d\neq \tn^r$ when $d\not\in D_0$. \par

For the sake of contradiction, suppose that $A_d^{\circ}\neq\varnothing$ for some $d\notin D_0$. Let $x\in A_d^\circ$ with $B(x,\varepsilon)\sse A_d$ and define $V=x^{-1}B(x,\varepsilon)$. Because $A_d$ is a subgroup of $\TT^r$, $1_{\TT^r}\in V\sse A_d$. Note that for any $y\in \TT^r$ the map $x\mapsto xy$ is an isometry. Then, a straightforward exercise in topological groups demonstrates that $\langle V\rangle$ is a clopen subgroup in $\T^r$. Because $\langle V\rangle \leq A_d$ and $\TT^r$ is connected, we get a contradiction.

So $A_d^{\circ}=\varnothing$ for every $d\notin D_0$.  But each $A_d$ is closed. Hence $\bigcup_{d\notin D_0}A_d$ also has empty interior. Because $M=\TT^r\backslash \bigcup_{d\notin D_0}A_d$, we deduce that $M$ is dense in $\TT^r$. \qedhere

    \end{proof}
\end{prop}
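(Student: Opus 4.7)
The plan is to prove that the complement $\TT^r \setminus M$ is a finite union of closed, nowhere-dense sets, which by a standard finite-union argument immediately yields density of $M$. Concretely, because $D$ acts on $\TT^r$ and an element $\chi \in \TT^r$ lies in $M$ precisely when no element of $D \setminus D_0$ fixes it (since elements of $D_0$ act trivially on $\TT^r$ by definition of $L = C_G(\Z^r)$), we have
\[
\TT^r \setminus M \;=\; \bigcup_{d \in D \setminus D_0} A_d, \qquad A_d := \on{Fix}_{\TT^r}(d).
\]
Each $A_d$ is closed (preimage of the diagonal under $\chi \mapsto (\chi, d\cdot\chi)$) and is in fact a closed subgroup of $\TT^r$ since the action is by group automorphisms. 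The union is finite because $|D| < \infty$, so it suffices to show each $A_d$ with $d \notin D_0$ is nowhere dense.

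The first key step is to observe that $A_d$ is a \emph{proper} subgroup of $\TT^r$ whenever $d \notin D_0$. Indeed, $A_d = \TT^r$ would mean $\chi(\gamma(d)^{-1} a \gamma(d)) = \chi(a)$ for every $\chi \in \wh{\Z^r}$ and every $a \in \Z^r$; Pontryagin duality then forces $\gamma(d) \in C_G(\Z^r) = L$, so $d \in s(L) = D_0$, a contradiction.

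The second, more substantive, step is to show that any proper closed subgroup of $\TT^r$ has empty interior. This will be the main obstacle only in the sense that it relies on the connectedness of $\TT^r$: if $A_d$ had a non-empty interior, pick $x \in A_d^\circ$ and set $V := x^{-1} A_d^\circ$; then $V$ is an open neighborhood of $1_{\TT^r}$ contained in $A_d$ (using that $A_d$ is a subgroup and that translation is a homeomorphism). The subgroup $\langle V \rangle$ generated by $V$ is then open, hence clopen (its complement is a union of translates of itself), and connectedness of $\TT^r$ forces $\langle V \rangle = \TT^r$, contradicting properness of $A_d$.

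Combining these, $\TT^r \setminus M$ is a finite union of closed sets with empty interior, hence itself closed with empty interior, so $M = \TT^r \setminus (\TT^r \setminus M)$ is dense. The main subtlety is simply to keep the group actions $G \curvearrowright \Z^r$, $D \curvearrowright \TT^r$, and the identification of the kernel of the latter with $D_0$ carefully lined up; once that is settled, the topological content is just the elementary fact that closed proper subgroups of connected topological groups are nowhere dense.
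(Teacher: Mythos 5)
Your proposal is correct and takes essentially the same route as the paper: write $\TT^r\setminus M$ as the finite union of the closed fixed-point sets $A_d$ over $d\in D\setminus D_0$, and show each is nowhere dense because a proper closed subgroup of the connected group $\TT^r$ cannot have nonempty interior (via the clopen subgroup generated by a neighborhood of the identity). The only difference is cosmetic: you spell out via Pontryagin duality why $A_d\neq\TT^r$ for $d\notin D_0$, a point the paper merely asserts.
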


In order to compute the covering dimension of $N_{\mt{K}}/D_1$, we first compute the covering dimension of $N_{\mt{K}}$ and then apply Proposition \ref{dim_of_quotient} to pass to the quotient.

\begin{prop}\label{dim_N_k=r}
    $\dim(N_{\mt{K}}/D_1)=r$.
    \begin{proof}
        By Lemma \ref{desired orbit is open} and Proposition \ref{density}, $N_{\mt{K}}$ is open in $\wh{L}_{1D}$ and there exists $\chi\in \TT^r$ such that $G_{\chi}=L$.  Proposition \ref{extension_of_char} guarantees that $N_{\mt{K}}$ is non-empty. Moreover, $\wh{L}_{1D}\cong \widehat{L_{ab}}\cong \TT^r \times F$ for some finite set $F$ endowed with the discrete topology. It follows that $\wh{L}_{1D}$ is metrizable, whence totally normal. We conclude $\dim(N_{\mt{K}})=r$ via Lemma \ref{non_empty_int_and_dimension}. 
        
        Because $D_1$ is a finite group, $q^{-1}(y)$ is finite for every $y\in N_k/D_1$. Further, per Remark \ref{rmk:quotient_map}, $q$ is a continuous, open, and closed surjection. Since $N_{\mt{K}}$, $N_{\mt{K}}/D_1$ are normal and paracompact (\cite[1.5.20 and 5.1.33]{engelking_topology}), Proposition \ref{dim_of_quotient} implies that $\dim(N_{\mt{K}}/D_1)=\dim(N_{\mt{K}})=r.$
    \end{proof}
\end{prop}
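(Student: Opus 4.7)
The plan is to split the computation into two steps: first establish $\dim(N_{\mt{K}})=r$, and then transfer this dimension to the quotient via Proposition \ref{dim_of_quotient}.

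For the first step, I would first identify the ambient space. Since $L$ contains $\Z^r$ as a subgroup of finite index and since $[L,L]$ is finite (which is already observed in the text, via the BFC argument), the abelianization $L_{ab}$ is a finitely generated abelian group of rank $r$, so $L_{ab}\cong \Z^r \times T$ for some finite torsion group $T$. Taking Pontryagin duals, $\wh{L}_{1D}\cong \widehat{L_{ab}}\cong \TT^r\times T$, which is metrizable and in particular totally normal. Now Lemma \ref{desired orbit is open} says $N_{\mt{K}}$ is open in $\wh{L}_{1D}$, and combining Proposition \ref{density} (characters of $\TT^r$ with maximal stabilizer exist) with Proposition \ref{extension_of_char} (any such character lifts to $\wh{L}_{1D}$) shows $N_{\mt{K}}\neq \varnothing$. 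So $N_{\mt{K}}$ is a non-empty open subset of the product $\TT^r\times T$, and Lemma \ref{non_empty_int_and_dimension} (applied to $X=\TT^r\times T$, $F=T$, and $U=N_{\mt{K}}$) immediately yields $\dim(N_{\mt{K}})=r$.

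For the second step, I would apply Proposition \ref{dim_of_quotient} to the quotient map $q\colon N_{\mt{K}}\to N_{\mt{K}}/D_1$. By Remark \ref{rmk:quotient_map}, $q$ is a continuous, open, closed surjection, and since $D_1$ is finite, each fiber $q^{-1}(y)$ has cardinality at most $|D_1|<\infty$. Both $N_{\mt{K}}$ (as a subspace of the metrizable space $\TT^r\times T$) and $N_{\mt{K}}/D_1$ (Hausdorff by Lemma \ref{quot_is_hausdorff}) are paracompact and normal, citing the standard topology facts \cite[1.5.20 and 5.1.33]{engelking_topology}. Then Proposition \ref{dim_of_quotient} gives $\dim(N_{\mt{K}}/D_1)=\dim(N_{\mt{K}})=r$.

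The step I expect to be the most delicate is verifying the hypotheses of Proposition \ref{dim_of_quotient}; in particular, confirming that the quotient $N_{\mt{K}}/D_1$ is paracompact and normal, rather than merely Hausdorff. This is where one has to rely on the finiteness of $D_1$ together with the nice structure of $\wh{L}_{1D}$ inherited from $\TT^r\times T$. The rest of the argument is a smooth combination of the already-established structural results from the section.
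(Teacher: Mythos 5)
Your proposal is correct and follows essentially the same route as the paper's proof: establish $\wh{L}_{1D}\cong\TT^r\times F$, use openness (Lemma \ref{desired orbit is open}) plus non-emptiness (Propositions \ref{density} and \ref{extension_of_char}) together with Lemma \ref{non_empty_int_and_dimension} to get $\dim(N_{\mt{K}})=r$, and then pass to the quotient via Remark \ref{rmk:quotient_map} and Proposition \ref{dim_of_quotient}. The only difference is that you spell out slightly more explicitly why $L_{ab}$ has rank $r$, which is a harmless elaboration.
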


\section{Proof of the main result}\label{sec:lower_bound}

We briefly provide a road map for our main result. We will first build off of the work in \cite{KanTay13,Mac58} to construct an injective map $\Phi: N_{\mt{K}}/D_1 \hookrightarrow \Prim_{\mt{K}}(C^*(G))$ for which $N_{\mt{K}}/D_1$ is homeomorphic to its image. This gives $\dim(\Prim_{\mt{K}}(C^*(G)))\geq \dim(N_{\mt{K}}/D_1)=h(G)$. The main theorem follows by combining the above with the known upper bound, $\dimnuc C^*(G)\leq h(G)$, from \cite[Prop. 2.14]{BL24} and the Main Theorem of \cite{Win04}.

\subsection{Defining \str{$\Phi$}{Phi}}

The arguments in this subsection rely on the Mackey Machine, which provides a complete description of $\wh{G}$ as a set. This construction is achieved via induced representations, which systematically extends representations from subgroups. We observe that induced representations are canonical in the sense of uniqueness up to orthonormal basis. See \cite[Ch 2]{KanTay13} for a more detailed description of this process.

\begin{thm}[Mackey Machine (\cite{KanTay13} Thm~4.28)]\label{thm:Mackey_machine}
Let $G$ be a discrete group containing a finite index normal abelian group $A$. Let
$\Omega\sse\wh{A}$ be a cross section of orbits under the action $G\curvearrowright \wh{A}$. Let $\wh{G}_{\chi}^{(\chi)}$ denote the subset of elements $\sigma\in\wh{G}_{\chi}$ where there exists $m\in\zn_{>0}$ such that
\begin{equation}\label{eqn:block_diag_restriction}
\sigma\big|_{A}=\chi^{\oplus m}.
\end{equation}
    Then 
    \[\wh{G}=\l\{\te{\normalfont{ind}}_{G_{\chi}}^G\sigma\,\colon\,\sigma\in \wh{G}_{\chi}^{(\chi)},\chi\in\Omega\r\}.\]
\end{thm}

We choose $G$ to be a finitely generated virtually abelian group of rank $r$ and define $L$, $\wh{L}_{1D}$, and $D_1$ as in Section \ref{sec:orbs-stabs} and $\rho$ and $N_{\mt{K}}$ as in Equation \ref{eqn:Nk}.

\begin{rmk}\label{rmk:orbits_same_induced}
We include three useful remarks.
\begin{itemize}
\item[(i)] Suppose $\chi$, $\chi'$ are in the same orbit. Then there exists $a\in G$ such that $\chi=a\cdot \chi'$. If we choose $\sigma\in\wh{G}_{\chi}^{(\chi)}$, then $\te{ind}_{G_{\chi}}^G\sigma\simeq \te{ind}_{G_{a\cdot \chi}}^Ga\cdot \sigma$ (\cite[Prop~2.39]{KanTay13}). This is to say, characters from the same orbit class induce the same representation. In addition, the Mackey Machine implies that whenever $\sigma\in G_{\chi}^{(\chi)}$, the induced representation, $\te{ind}_{G_{\chi}}^G\sigma$, is irreducible.
\item[(ii)] When $\chi\in N_{\mt K}$, then, by definition $G_{\rho(\chi)}=L$ where $\rho(\chi):=\chi|_{\Z^r}$. Thus, $\te{ind}_L^G\chi=\te{ind}_{G_{\rho(\chi)}}^G\,\chi$ is irreducible by the Mackey Machine.
\item[(iii)] \cite[Sec 4.1]{KanTay13} shows that each $\pi\in\wh{G}$ is associated with a unique orbit, which means there exists a unique $\mc{O}_{\zeta}$ (as in Eqn \ref{eqn:stabilizer_and_orbit}) for some $\zeta\in\wh{\zn}^r$ such that
\[\pi|_{\zn^r}=\l(\te{ind}_{G_{\zeta}}^G\sigma\r)\bigg|_{\zn^r}=\bigoplus_{g\in G\slash G_{\zeta}}(g\cdot \zeta)^{\oplus m}\]
where $\sigma\in\wh{G}_{\zeta}^{(\zeta)}$ and $m=\dim\sigma$. Moreover, if $\pi\simeq\pi'$ are associated with orbits $\mc{O}_{\zeta}$ and $\mc{O}_{\zeta'}$ then $\mc{O}_{\zeta}=\mc{O}_{\zeta'}$ In the case that $\chi\in \wh{L}_{1D}$ and $\te{ind}_L^G\chi$ is irreducible, we have $\sigma=\chi$, $G_{\rho(\chi)}=L$, and $|\mc{O}_{\rho(\chi)}|=\mt{K}$ (by Lemma \ref{stabilizers_in_virtually_abelian_case}).
\end{itemize}
\end{rmk}

 Define
\[\Phi:N_{\mt{K}}/D_1\rightarrow \te{Prim}_{\mt{K}}(C^*(G))\quad\te{via}\quad\Phi([\chi])=\te{ind}_L^G \chi.\]

We seek to prove that $\Phi$ is a homeomorphism onto its image (i.e., a well-defined, continuous, injective map, with continuous inverse). This claim relies on two lemmas. The first of which proves a more general version of injectivity for $\Phi$ using Remark \ref{rmk:orbits_same_induced} and the second is a standard exercise in topological sequences (quickly justified via contrapositive).

\begin{lemma}\label{lemma: when_ind_is_irreducible}
Let $\chi_1\in\wh{L}_{1D}$. Suppose there exists $\chi_2\in N_{\mt K}$ such that $\te{ind}_L^G\chi_2\simeq \te{ind}_L^G\chi_1$. Then, $\chi_1\in N_{\mt K}$ and $[\chi_1]=[\chi_2]\in N_{\mt K}/D_1$.

\begin{proof}
    Suppose $\te{ind}_L^G\chi_1\simeq \te{ind}_L^G\chi_2$ for $\chi_1\in\wh{L}_{1D}$ and $\chi_2\in N_{\mt{K}}$. Let $\rho(\chi_2)=(\chi_2)|_{\zn^r}$.

    First, we observe that $\chi_2\in\ N_{\mt K}$ and Remark \ref{rmk:orbits_same_induced} (ii) gives $\te{ind}_L^G\chi_2$ (and thus $\te{ind}_L^G\chi_1$) is irreducible. As indicated in Remark \ref{rmk:orbits_same_induced} (iii), there is a unique orbit  associated with $\te{ind}_L^G\chi_1\simeq\te{ind}_L^G\chi_2$. In particular, $\mc{O}_{\rho(\chi_1)}=\mc{O}_{\rho(\chi_2)}$ where $|\mc{O}_{\rho(\chi_2)}|=\mt{K}$. Therefore, Lemma \ref{stabilizers_in_virtually_abelian_case} implies $G_{\rho(\chi_1)}=L$ and $\chi_1\in N_{\mt K}$ by definition.

    Second, we note the dimension of $\te{ind}_L^G \chi_1$ is $[G:L]=|D_1|={\mt{K}}$. Thus, fix a Hilbert space $\mc{H}_{\mt{K}}$ as in Section \ref{rmk:standard_hilbert}. Because $\te{ind}_L^G\chi_1\simeq \te{ind}_L^G\chi_2$, there exists a unitary $U:\mc{H}_{\mt{K}}\rightarrow \mc{H}_{\mt{K}}$ such that 
     \[U\l[\l(\te{ind}_L^G\chi_1\r)(g)\r]U^{-1}(\xi)=\l[\l(\te{ind}_L^G\chi_2\r)(g)\r](\xi)\text{ for all }g\in G,\,\xi\in \mc{H}_{\mt{K}}.\]
     We observe that for any $\chi\in\wh{L}_{1D}$,
     \[\l[\te{ind}_L^G\chi\r](h)=\bigoplus_{a\in G\slash L}(a\cdot \chi)(h)=\bigoplus_{a\in D_1}(a\cdot \chi)(h)\text{ for any }h\in L\]
     because $L$ is normal in $G$. Therefore, for any $h\in L$ and $\xi\in \mc{H}_{\mt{K}}$,
     \begin{align*}
     U\l[\l(\te{ind}_L^G\chi_1\r)(h)\r]U^{-1}(\xi)&=\l[\l(\te{ind}_L^G\chi_2\r)(h)\r](\xi)\\
     U\l[\bigoplus_{a\in D_1}(a\cdot \chi_1)(h)\r]U^{-1}(\xi)&=\l[\bigoplus_{b\in D_1}(b\cdot \chi_2)(h)\r](\xi).\\
     \end{align*}
     
    Thus, for every $h\in L$, $\bigoplus_{a\in D_1}(a\cdot \chi_1)(h)$ and $\bigoplus_{b\in D_1}(b\cdot \chi_2)(h)$ are similar matrices and so they must have the same diagonal entries up to a permutation of $\{1,2,...,|D_1|\}$. Because the unitary $U$ that implements the similarity does not depend on $h$, neither does the permutation. We conclude that $[\chi_1]=[\chi_2]\in N_{\mt{K}}\slash D_1$. \qedhere
\end{proof}

\end{lemma}

\begin{lemma}\label{lem:convergence_equiv}
  Let $X$ be a topological space. With $(x_n)_{n\in \N},x\in X$, $x_n\to x$ if and only if for an arbitrary subsequence $(x_{k_n})_{n\in \N}$ there exists a subsequence $(\ell_n)_{n\in \N}\subset (k_n)_{n\in \N}$ such that $x_{\ell_n}\to x$.
\end{lemma}

\begin{prop}\label{phi_homeo_onto_image}
$\Phi$ is a homeomorphism onto its image.
\begin{proof}
We split the proof into four claims.

\begin{claim}
$\Phi$ is well-defined. 
\end{claim}

\begin{proof}[Proof of Claim 1]
 Fix $\chi\in N_{\mt{K}}$. By Remark \ref{rmk:orbits_same_induced} (ii), $\te{ind}_L^G\chi$ is irreducible. If $\chi_1,\chi_2\in N_{\mt K}$ are on the same orbit (under $D_1 \curvearrowright \wh{L}_{1D}$), Remark \ref{rmk:orbits_same_induced} (i) implies $\te{ind}_L^G \chi_1\simeq \te{ind}_L^G \chi_2$. \qedhere

 \end{proof}
 
\begin{claim}
$\Phi$ is continuous.
\end{claim}

\begin{proof}[Proof of Claim 2]
Because $q$ is a continuous open surjection (Remark \ref{rmk:quotient_map}), it follows that it is a quotient map. Let $\psi:N_{\mt K}\rightarrow \Prim_{\mt{K}}(C^*(G))$ be defined via $\psi(\chi)=\te{ind}_{L}^G\chi$ and observe that $\psi=\Phi \circ q$. By \cite[Thm 22.2]{Munkres_topology}, it is enough to show that $\psi$ is continuous. \par
         
         Let $\chi_n\to \chi$ in $N_{\mt K}$. \cite[Lemma 4.20]{CW24} and \cite[3.5.8 (p.83)]{Dix77} yield $\te{ind}_L^G\chi_n\to \te{ind}_L^G\chi$ in $\Prim_{\mt{K}}(C^*(G))$. \qedhere
\end{proof}
         
\begin{claim}
$\Phi$ is injective.
\end{claim}

\begin{proof}[Proof of Claim 3]
This follows by Lemma \ref{lemma: when_ind_is_irreducible}. \qedhere
\end{proof}

Because $\Phi$ is injective, $\Phi$ is bijective onto its image. Thus, there is a well-defined map 
\begin{align*}
\Psi:\Phi(N_{\mt K}/D_1)&\rightarrow N_{\mt K}\slash D_1\\
\Phi([\chi])&\mapsto[\chi]
\end{align*}

Although we have frequently abused notation and written $\pi\in\te{Prim}_*(C^*(G))$, we will need to be more precise in the proof of the following claim. We let $[[\sigma]]$ denote an element of $\te{Prim}_*(C^*(G))$ (which is an {\em equivalence class} of irreducible representations) and $\sigma$ to an element of $\Rep_*(C^*(G))$ (which is a representation on a fixed Hilbert space $\mc{H}_*$).

\begin{claim}
   $\Psi:\Phi(N_{\mt{K}}/D_1)\to N_{\mt{K}}/D_1$ is continuous.
\end{claim}
\begin{proof}[Proof of Claim 4]
Let $[[\pi_n]],[[\pi]] \in \Phi(N_{\mt{K}}/D_1)$ such that $[[\pi_n]]\to [[\pi]]$ in \par \noindent$\Prim_{\mt{K}}(C^*(G))$. Note that there exist $\chi_n,\chi\in N_{\mt{K}}\sse\wh{L}_{1D}$ such that $\pi_n\simeq \te{ind}_{L}^G \chi_n$ for every $n$, and $\pi\simeq \te{ind}_L^G \chi$. \par
Let $(k_n)_{n\in \N}$ be an increasing sequence of natural numbers. Because $\wh{L}_{1D}$ is compact (see discussion in Section \ref{subsec:centralizer}), there exists a subsequence $(\ell_n)_{n\in \N}\subset (k_n)_{n\in \N}$ and $\zeta\in \wh{L}_{1D}$ such that $\chi_{\ell_n}\to \zeta$ in $\wh{L}_{1D}$. Fix a Hilbert space $\mc{H}_{\mt K}$ as in Section \ref{rmk:standard_hilbert}. By \cite[Lemma 4.22]{CW24} we deduce that $\pi_{\ell_n}\to \te{ind}_L^G \zeta$ in $\Rep_{\mt{K}}(C^*(G))$ (where $\pi_{\ell_n}\in\te{Irr}_{\mt{K}}(C^*(G))$ for all $n\in\nn$). We show $\te{ind}_L^G\zeta\simeq\pi$.

Recall that $\te{Prim}_{\mt K}(C^*(G))$ is Hausdorff (see Remark \ref{rmk:prim_hausdorff}). Because we have assumed that $[[\pi_n]]\to [[\pi]]$ in $\Prim_{\mt{K}}(C^*(G))$, we must also have $[[\pi_{\ell_n}]]\rightarrow[[\pi]]$ where we note that this limit is unique. 
 But \cite[Thm. 4.7]{CW24}, implies that $[[\pi_{\ell_n}]]\to [[\te{ind}_L^G \zeta]].$ It follows that $\pi\simeq \te{ind}_L^G\zeta$.

Combining our assumptions with the previous conclusion, we see that $\te{ind}_L^G\zeta$ $\simeq \pi\simeq \te{ind}_L^G\chi$. Lemma \ref{lemma: when_ind_is_irreducible} then yields $[\zeta]=[\chi]$, and so $[\chi_{\ell_n}]\to [\chi]$. Lemma \ref{lem:convergence_equiv} implies $[\chi_n]\to [\chi]$, as desired. \qedhere

\end{proof}
The result follows immediately from the above four claims.

\end{proof}
\end{prop}

\begin{ex}\label{phi_not_surj}
    $\Phi$ need not be surjective. Indeed, let $D_0$ be any nonabelian finite group and $\pi\in \widehat{D_0}$ any irreducible representation such that $\dim(\pi)=\ell>1$. Consider the group $\zn^{\ell}\rtimes (D_0\times \zn_{\ell})$ where $D_0$ acts trivially on $\Z^{\ell}$ and $\Z_{\ell} \curvearrowright \Z^{\ell}$ via a cyclic automorphism of order $\ell$. Define $\widetilde{\pi}:=\pi\otimes \rho:D_0\times \Z_{\ell}\to \usf(\ell)$ to be the tensor product representation, which is irreducible (\cite[Ex. 2.36]{fulton_harris_rep_theory}). Here $\rho:\Z_{\ell} \to \TT$ can be taken to be any character of $\Z_{\ell}$. \par
     Let $\chi_0\in \TT^{\ell}$ be the trivial character over $\Z^{\ell}$ and define the irreducible representation \[\sigma:=\chi_0\times \widetilde{\pi}:G\to U(\ell)\] via $\sigma(g,d)=\widetilde{\pi}(d)$. Then, $\sigma(g)=I_{\ell}$ for every $g\in \Z^{\ell}$, which implies $\sigma|_{\zn^{\ell}}=\chi_0^{\oplus \ell}$. Because $G_{\chi_0}=G$, we deduce that $\sigma$ is not on the image if $\Phi$.

\end{ex}

\subsection{Topology of the Spectrum}

We now investigate the topology of the primitive spectrum of $C^*(G)$. In general, $\te{Prim}\,(C^*(G))$ is not Hausdorff (not even when $G$ is crystallographic, see \cite[Section 5]{CW24} for an explicit example). However, if we fix $k$ and restrict to $\te{Prim}_k(C^*(G))$, then the situation is much nicer. These topological spaces are not only Hausdorff, but even totally normal. \par

To this end, we provide two short lemmas which we expect are known to experts but we could not find explicitly in the literature.
\begin{lemma}\label{lem: closed restrictions}
    Let $X$ and $Y$ be topological spaces with $f:X\to Y$ a closed map. If $B\subseteq Y$ and we define $A:=f^{-1}(B)$, then the restriction and corestriction $f|_A^B: A\to B$ is closed. 
\end{lemma}

\begin{proof}
    Consider a closed set $C\subseteq A$. There exists a closed set $C_0\subseteq X$ such that $C=A\cap C_0$. It can be quickly checked that 
    \[ f(C)=B\cap f(C_0). \]
    In particular, $f(C)$ is the intersection of a closed set with $B$ and thus closed in $B$.
\end{proof}

\begin{lemma}\label{lemma:topology_lemma}
  Let $f:X\to Y$ be a continuous, closed and surjective map. Assume that $X$ is totally normal and $Y$ is Hausdorff. Then $Y$ is totally normal.
  \begin{proof}
    Let $Z\subseteq Y$ be a subspace. It is enough to show that $Z$ is normal. Let $W:=f^{-1}(Z)$ and $g:W\rightarrow Z$ be the restriction of $f$ to $W$. Via Lemma \ref{lem: closed restrictions}, $g$ is closed, continuous and surjective. Moreover, $W$ is normal as a subspace of a totally normal space. By \cite[Ex. 6, Section 31]{Munkres_topology}, $Z$ is normal, completing the proof.
  \end{proof}
\end{lemma}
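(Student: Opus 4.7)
The plan is to unwind the definition of total normality: a space is totally normal iff every one of its subspaces is normal. So I would fix an arbitrary subspace $Z \subseteq Y$ and show $Z$ is normal by transporting normality from a subspace of $X$ through a well-chosen restriction of $f$.

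Concretely, set $W := f^{-1}(Z) \subseteq X$ and let $g := f|_W : W \to Z$. Since $X$ is totally normal, $W$ is normal as a subspace. The map $g$ is clearly continuous and surjective (as $f$ surjects onto $Y \supseteq Z$, every point of $Z$ has a preimage in $W$). The crucial step is verifying that $g$ is closed: given $C$ closed in $W$, write $C = D \cap W$ for some $D$ closed in $X$, and check the set-theoretic identity
\[
g(C) \;=\; f(D \cap f^{-1}(Z)) \;=\; f(D) \cap Z,
\]
where the nontrivial inclusion $\supseteq$ uses that any $y \in f(D) \cap Z$ has a preimage in $D$ that automatically lies in $f^{-1}(Z) = W$. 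Since $f$ is closed, $f(D)$ is closed in $Y$, and therefore $g(C)$ is closed in $Z$.

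At this point I would invoke the standard fact that a continuous closed surjection from a normal space onto a $T_1$ space forces the target to be normal (this is a classical exercise in Munkres, for example). The hypothesis that $Y$ is Hausdorff ensures $Z$ is Hausdorff, hence $T_1$, so the standard fact applies to $g : W \to Z$ and produces normality of $Z$. Since $Z \subseteq Y$ was arbitrary, $Y$ is totally normal.

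The only real point that needs care is the closedness of $g$, i.e.\ the identity $f(D \cap f^{-1}(Z)) = f(D) \cap Z$; this is the reason we pull back via $f^{-1}(Z)$ rather than restricting $f$ to an arbitrary subset of $X$ mapping into $Z$. Beyond that, the argument is a direct chaining of two routine topological facts, and the Hausdorff hypothesis on $Y$ only serves to guarantee the $T_1$ property of the target needed for the final black-box theorem.
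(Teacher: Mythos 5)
Your proposal is correct and follows essentially the same route as the paper: pull back $Z$ to the saturated set $W=f^{-1}(Z)$, restrict $f$ to a closed continuous surjection $g:W\to Z$, and invoke the standard fact that the closed continuous image of a normal space is normal. The only difference is that you explicitly verify the identity $f(D\cap f^{-1}(Z))=f(D)\cap Z$ to justify closedness of the restriction, a detail the paper states without proof.
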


In addition to the lemmas above, we will invoke a well known result of point set topology.

\begin{prop}[\cite{dieck08}, Prop 1.4.4]\label{prop:hausdorff_closed}
Let $f:X\rightarrow Y$ be a quotient map. If $X$ is a compact Hausdorff space, then the following are equivalent:
\begin{enumerate}[label=(\roman*)]
\item $Y$ is Hausdorff.
\item $f$ is a closed map.
\item $\ker(f):=\{(x,x')\in X\times X: f(x)=f(x')\}$ is a closed set in $X\times X$.
\end{enumerate}
\end{prop}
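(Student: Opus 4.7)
The plan is to establish the three-way equivalence by running a short cycle of implications, leaning on the two ingredients \textit{compact Hausdorff implies normal} and \textit{projections out of a compact factor are closed}.

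The implication $(i) \Rightarrow (iii)$ is immediate from the standard characterization of Hausdorffness: if $Y$ is Hausdorff, then the diagonal $\Delta_Y \subseteq Y \times Y$ is closed, and since $f \times f : X \times X \to Y \times Y$ is continuous, the set
\[
  \ker(f) = (f \times f)^{-1}(\Delta_Y)
\]
is closed in $X \times X$. For $(iii) \Rightarrow (ii)$, I would fix a closed (hence compact) set $C \subseteq X$ and rewrite the saturated preimage as
\[
  f^{-1}(f(C)) \;=\; \pi_1\!\bigl(\ker(f) \cap (X \times C)\bigr),
\]
where $\pi_1$ is projection onto the first coordinate. The set $\ker(f) \cap (X \times C)$ is closed in the compact space $X \times C$, and the projection $\pi_1 : X \times C \to X$ is a closed map because $C$ is compact. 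Hence $f^{-1}(f(C))$ is closed in $X$, and since $f$ is a quotient map, $f(C)$ is closed in $Y$, giving (ii).

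The remaining implication $(ii) \Rightarrow (i)$ is the one I expect to require the most care. Given distinct $y_1, y_2 \in Y$, the fibers $K_i := f^{-1}(y_i)$ are disjoint closed subsets of the compact Hausdorff (hence normal) space $X$, so by normality choose disjoint open $U_1, U_2 \subseteq X$ with $K_i \subseteq U_i$. The key trick is now to push the separation down to $Y$ by setting
\[
  V_i \;:=\; Y \setminus f(X \setminus U_i),
\]
which is open in $Y$ precisely because $f$ is a closed map. A direct chase shows $y_i \in V_i$: the condition $y_i \in V_i$ is equivalent to $f^{-1}(y_i) \subseteq U_i$, which is our hypothesis. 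Moreover, if some $y \in V_1 \cap V_2$, then $f^{-1}(y) \subseteq U_1 \cap U_2 = \varnothing$, contradicting surjectivity of $f$. Thus $V_1, V_2$ are disjoint open neighborhoods separating $y_1, y_2$, so $Y$ is Hausdorff.

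The main obstacle is the construction in $(ii) \Rightarrow (i)$: normality of $X$ only separates the fibers \emph{inside} $X$, and one must leverage closedness of $f$ via the complement-image device $V_i = Y \setminus f(X \setminus U_i)$ to transport that separation to $Y$. The rest of the proof is bookkeeping with compactness and the defining property of a quotient map.
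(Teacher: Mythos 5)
The paper does not prove this proposition---it is quoted verbatim from tom Dieck's book---so there is no in-paper argument to compare against; your proposal is the standard textbook proof and, as far as I can check, it is correct. The cycle $(i)\Rightarrow(iii)\Rightarrow(ii)\Rightarrow(i)$ works: the pullback of the diagonal gives $(i)\Rightarrow(iii)$; the identity $f^{-1}(f(C))=\pi_1\bigl(\ker(f)\cap(X\times C)\bigr)$ together with closedness of the projection along a compact factor (or, even more simply, compactness of $\ker(f)\cap(X\times C)$ inside the compact $X\times X$ and Hausdorffness of $X$) gives $(iii)\Rightarrow(ii)$ via the quotient property; and the complement-image device $V_i=Y\setminus f(X\setminus U_i)$ correctly transports the normal separation of the fibers down to $Y$. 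The only step you assert without justification is that the fibers $K_i=f^{-1}(y_i)$ are \emph{closed}: this is not automatic from the hypotheses of $(ii)$ alone, but it does follow in one line---since $X$ is $T_1$ and $f$ is a closed surjection, each singleton $\{y\}=f(\{x\})$ is closed in $Y$, so $Y$ is $T_1$ and the fibers are closed. You should insert that sentence before invoking normality; with it, the argument is complete.
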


\begin{prop}\label{prim_k_comp_normal}
   For every $k\in \N$, $\textnormal{Prim}_{k}(C^*(G))$, endowed with the Fell topology, is a totally normal topological space.
   \begin{proof}
       We first consider the quotient map $$\rho:\Rep_k(C^*(G))\to \Rep_k(C^*(G))/\simeq$$ 
       where $\pi\simeq \sigma$ if and only if there exists a unitary $U$ such that $\pi(g) U=U\sigma(g)$ for all $g\in G$. We recall that $\Rep_k(C^*(G))$ is Hausdorff. The fact that $G$ is finitely presented,\footnote{It is known that if $[G:H]<\infty$ and $H$ is finitely presented, then $G$ is also finitely presented. Since finitely generated abelian groups are finitely presented, so are finitely generated virtually abelian groups.} \cite[Lemma 2.5]{CW24}, and $\usf(k)$ is compact imply that $\Rep_k(C^*(G))$ is compact. We will now show that $\ker(\rho)$ is closed. Indeed, let $(\pi_n,\sigma_n)\in \te{Rep}_k(C^*(G))\times\te{Rep}_k(C^*(G))$ converge to $(\pi,\sigma)$ where, for each $n\in\zn_{>0}$, $\pi_n\simeq \sigma_n$. Thus, for every $n$, there exist $U_n \in \usf({k})$ such that $U_n\pi_n(g)=\sigma_n(g)U_n$ for every $g\in G$. Because $\usf({k})$ is compact, there exists a subsequence $(w_n)_{n\in \N}$ and $U\in \usf({k})$ such that $U_{w_n}\to U$. By taking limits at infinity, we deduce that $U\pi(g)=\sigma(g)U$ for every $g\in G$. Hence $\pi\simeq \sigma$, verifying $\ker(\rho)$ is closed. \par
       
       Because $\rho$ is continuous and surjective, Proposition \ref{prop:hausdorff_closed} implies $\rho$ is closed. Since irreducibility is unitarily invariant, we have that $\rho^{-1}(\te{Irr}_k(C^*(G))/\simeq)=\te{Irr}_k(C^*(G))$. In particular, the restriction $\rho: \te{Irr}_k(C^*(G))\to \te{Irr}_k(C^*(G))/\simeq$ is a closed map by Lemma \ref{lem: closed restrictions}. \par
       
      $\te{Irr}_{k}(C^*(G))$ is totally normal (in fact, completely metrizable \cite[3.7.4 (p.89)]{Dix77}). So, by Lemma \ref{lemma:topology_lemma}, $\te{Irr}_k(C^*(G))/\simeq$ is also totally normal. Since $\te{Prim}_k(C^*(G))\cong\te{Irr}_k(C^*(G))/\simeq$ under the map $[\pi]\mapsto\ker \pi$, we conclude that $\te{Prim}_k(C^*(G))$ is totally normal. \qedhere
     
   \end{proof}
 \end{prop}

   Now we are ready to prove the main result of the paper.

 \begin{thm}\label{main_result_2}
Let $G$ be a discrete, finitely generated, virtually abelian group. Then $\dimnuc (C^*(G))=h(G).$
\begin{proof}
We first show that $\dim(\textnormal{Prim}_{\mt{K}}(C^*(G))\geq h(G)$. \par
Indeed, by Proposition \ref{phi_homeo_onto_image} we can view $N_{\mt{K}}/D_1$ as a subspace of $\te{Prim}_{\mt{K}}(C^*(G))$. So, Propositions \ref{dim_N_k=r}, \ref{prim_k_comp_normal}, and \ref{monot_covering_dim} imply that $h(G)=\dim(N_{\mt{K}}/D_1)\leq $ \par\noindent$\dim(\te{Prim}_{\mt{K}}(C^*(G)))$. \par
Combined with Theorem \ref{dim_nuc_subhomogeneous} and \cite[Prop. 2.14]{BL24}, we have the following series of inequalities
$$h(G)\leq \dim \te{Prim}_{\mt{K}}(C^*(G))\leq \dimnuc (C^*(G))\leq h(G).$$
Hence, equality must hold everywhere, so the result follows.
\end{proof}
 \end{thm}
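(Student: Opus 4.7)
The plan is a squeeze argument. The upper bound $\dim_{nuc}(C^*(G)) \leq h(G)$ is already known from \cite[Prop. 2.14]{BL24}, so I need only produce a matching lower bound. Since $G$ is finitely generated and virtually abelian, Theorem \ref{note:subhomogeneous} guarantees that $C^*(G)$ is separable and subhomogeneous, and Winter's Theorem \ref{dim_nuc_subhomogeneous} then identifies $\dim_{nuc}(C^*(G))$ with $\max_i \dim \te{Prim}_i(C^*(G))$. It therefore suffices to exhibit a single stratum of the primitive spectrum whose covering dimension is at least $r = h(G)$.

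My stratum of choice is $\te{Prim}_{\mt{K}}(C^*(G))$, corresponding to irreducible representations of the maximum possible dimension $\mt{K} = [G : L]$; these are precisely the representations induced from characters lying in the maximal orbits. Proposition \ref{phi_homeo_onto_image} supplies an injection
\[
\Phi : N_{\mt{K}}/D_1 \hookrightarrow \te{Prim}_{\mt{K}}(C^*(G))
\]
that is a homeomorphism onto its image, and Proposition \ref{dim_N_k=r} tells me $\dim(N_{\mt{K}}/D_1) = r$. To propagate the dimension computation from the image of $\Phi$ to the ambient space $\te{Prim}_{\mt{K}}(C^*(G))$, I invoke monotonicity of covering dimension under passage to subspaces (Proposition \ref{monot_covering_dim}), which requires the ambient space to be totally normal. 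This is precisely the purpose of Proposition \ref{prim_k_comp_normal}. Thus $\dim \te{Prim}_{\mt{K}}(C^*(G)) \geq r = h(G)$.

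Stringing all of the above together yields the chain
\[
h(G) \leq \dim \te{Prim}_{\mt{K}}(C^*(G)) \leq \max_i \dim \te{Prim}_i(C^*(G)) = \dim_{nuc}(C^*(G)) \leq h(G),
\]
which forces equality throughout and proves the theorem. The main obstacle is not this final concatenation but rather the supporting machinery developed in Section \ref{sec:orbs-stabs} and earlier in Section \ref{sec:lower_bound}: constructing $\Phi$ and verifying it is a topological embedding via the Mackey Machine and Fell-topology continuity criteria (Proposition \ref{phi_homeo_onto_image}), computing $\dim(N_{\mt{K}}/D_1) = r$ through the density argument and the quotient-dimension result of Proposition \ref{dim_of_quotient} (Proposition \ref{dim_N_k=r}), and establishing total normality of each $\te{Prim}_k(C^*(G))$ via a closed-quotient argument on $\te{Irr}_k(C^*(G))$ (Proposition \ref{prim_k_comp_normal}). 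With those three results in hand, the main theorem follows as a bookkeeping assembly with no further input required.
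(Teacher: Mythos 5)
Your proposal is correct and follows essentially the same route as the paper's own proof: the lower bound via the embedding $\Phi$ of $N_{\mt{K}}/D_1$ into $\te{Prim}_{\mt{K}}(C^*(G))$ together with total normality and monotonicity of covering dimension, squeezed against the known upper bound through Winter's theorem. The only (harmless) addition is your explicit citation of Theorem \ref{note:subhomogeneous} to justify applying Theorem \ref{dim_nuc_subhomogeneous}, which the paper leaves implicit.
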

 \begin{rmk}
     Although $\Phi$ may not be surjective (see Example \ref{phi_not_surj}), the proof of the above theorem tells us that $\dim(N_{\mt{K}}/D_1)=\dim(\te{Prim}_{\mt{K}}(C^*(G)))$.
 \end{rmk}

 \section{Crystal-Like Sequences}\label{sec:crystal_like_groups}

As noted in Example \ref{example:crystallographic_group_defn}, a well trodden family of virtually abelian groups is the crystallographic groups. These groups carry a faithful action $G\curvearrowright \Z^r$. Additionally, all crystallographic groups possess $\chi\in \T^r\cong \widehat{\Z^r}$ such that $|\mc{O}_\chi|=[G:\zn^r]$ (see Example \ref{ex:crystallographic_phi_surj}). In this section, we investigate an intermediary between virtually abelian and crystallographic groups which we coin \textit{crystal-like}. We will highlight some difficulties that arise when trying to classify crystal-like groups and close by demonstrating that the order of the point group is invariant for crystal-like group-lattice pairs. In particular, for crystallographic groups, the order of the point group is a $C^*$-invariant.
\par

\begin{defn}
    A \textit{group-lattice pair} is the data of groups $(G,A)$ with $A$ finitely generated, abelian, $A\trianglelefteq G$, and $[G:A]<\infty$. We call $A$ the \textit{lattice} and $D:=G/A$ the \textit{point group} for the group-lattice pair.
\end{defn}

\begin{defn}\label{defn:crystal-like}
    We say that a group-lattice pair, $(G,A)$, is \textit{crystal-like} if there exists $\chi \in \widehat{A}$ such that $|\mc{O}_{\chi}|=[G:A]$. The orbit here is taken under the (induced) conjugation action $ (G/A)\curvearrowright \widehat{A}$.
\end{defn}
For any $\chi\in \wh{A}$, if $|\mc{O}_{\chi}|=[G:A]$, we call the orbit \emph{principal}. 

\begin{rmk}\label{rmk:char_of_cryst_like}
We note that if $G_{\chi}=A$ for some $\chi\in\wh{A}$, then $|G\slash A|=|G\slash G_{\chi}|=|\mc{O}_{\chi}|$. Thus, $(G\slash A)\curvearrowright \wh{A}$ having a principal orbit is equivalent to there existing $\chi\in\wh{A}$ such that $G_{\chi}=A$. 

Let $D=G\slash A$. Notice that if $D\curvearrowright A$ is not faithful, then there exists $d\in D\setminus\{1\}$ such that $d\cdot a=a$ for every $a\in A$. It follows that $d^{-1}\cdot \chi=\chi$ for every $\chi\in \widehat{A}$ and thus $D\curvearrowright \widehat{A}$ does not have any principal orbit. In other words, if $D\curvearrowright \widehat{A}$ has a principal orbit, then $D\curvearrowright A$ is faithful.

The above discussion in combination with Lemma \ref{stabilizers_in_virtually_abelian_case} implies that $(G,A)$ is a crystal-like group-lattice pair if and only if $A$ is a stabilizer subgroup under the conjugation action $G\curvearrowright \wh{A}$.
\end{rmk}

\begin{ex}\label{ex:not_crystal_like}
Unfortunately, a faithful action $D\curvearrowright G$ does not necessarily give rise to a crystal-like group-lattice pair. Let $F$ be a finite abelian group such that $|F|<|\te{Aut}\,(F)|$ (e.g., $F=\l(\zn_2\r)^2$).
Then, consider the group $G$ fitting into the exact sequence
\[1\rightarrow \zn\times F\rightarrow G\rightarrow \te{Aut}\,(F)\rightarrow 1\]
    where for all $\sigma\in\te{Aut}\,F$, $(z,f)\in \zn\times F$, we define $\sigma\cdot (z,f)=(z,\sigma(f))$. This action is faithful but, for any $\chi\in \wh{\zn}\times \wh{F}$, $|\mc{O}_{\chi}|\leq |F|<|\te{Aut}\,(F)|$.
\end{ex}

\begin{prop}\label{prop:implies_crystal-like}
Let $G$ be a virtually abelian group as in Remark \ref{virtually_abelian_general_form}. If $L:=C_G(\Z^r)$ is abelian, then $(G,L)$ is a crystal-like group-lattice pair. 
\end{prop}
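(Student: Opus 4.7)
My plan is to reduce to the reformulation in Remark~\ref{rmk:char_of_cryst_like}: it suffices to produce $\chi \in \widehat{L}$ whose $G$-stabilizer under conjugation equals $L$. First I would verify that $(G, L)$ is a valid group-lattice pair. Indeed, $L = C_G(\Z^r)$ is the kernel of the conjugation action $G\curvearrowright \Z^r$, hence normal in $G$; it is finitely generated as a finite-index subgroup of the finitely generated group $G$; it is abelian by hypothesis; and $[G:L] = \mathtt{K} < \infty$.

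Next, I would leverage the nonemptiness of $N_{\mathtt{K}}$ established in Section~\ref{sec:orbs-stabs}. Proposition~\ref{density} furnishes $\chi_0 \in \T^r = \widehat{\Z^r}$ with $G_{\chi_0} = L$, and Proposition~\ref{extension_of_char} extends $\chi_0$ to some $\widetilde{\chi} \in \widehat{L}_{1D}$ with $\rho(\widetilde{\chi}) = \chi_0$, placing $\widetilde{\chi} \in N_{\mathtt{K}}$. Since $L$ is abelian, $\widehat{L}_{1D} = \widehat{L}$, so $\widetilde{\chi}$ is already a character of $L$.

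The crucial observation is that abelianness of $L$ forces $L \subseteq G_{\widetilde{\chi}}$: for any $g, h \in L$,
\[(g\cdot\widetilde{\chi})(h) = \widetilde{\chi}(g^{-1}hg) = \widetilde{\chi}(h).\]
On the other hand, any element fixing $\widetilde{\chi}$ must fix its restriction to $\Z^r$, yielding the reverse containment $G_{\widetilde{\chi}} \subseteq G_{\rho(\widetilde{\chi})} = L$. Combining the two gives $G_{\widetilde{\chi}} = L$, so by Remark~\ref{rmk:char_of_cryst_like} the pair $(G, L)$ is crystal-like.

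I do not anticipate a serious obstacle: once one notices that abelianness of $L$ automatically lifts the defining condition of $N_{\mathtt{K}}$ (stabilizer of the $\Z^r$-restriction equals $L$) to the full stabilizer condition (stabilizer of the entire character equals $L$), the argument reduces to invoking results already in hand.
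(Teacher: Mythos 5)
Your proposal is correct and follows essentially the same route as the paper: both reduce to finding $\chi\in\widehat{L}$ with $G_\chi=L$ via Remark \ref{rmk:char_of_cryst_like}, both use the sandwich $L\leq G_\chi\leq G_{\rho(\chi)}=L$ (with the first inclusion from abelianness of $L$), and both invoke Propositions \ref{density} and \ref{extension_of_char} to guarantee $N_{\mt{K}}\neq\varnothing$. Your additional verification that $(G,L)$ is a valid group-lattice pair is a harmless extra detail the paper leaves implicit.
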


\begin{proof}
Throughout this proof $\wh{L}_{1D}$ and $N_{\mt{K}}$ are as in Section \ref{sec:orbs-stabs}. Because of Remark \ref{rmk:char_of_cryst_like}, it is enough to find $\chi\in \widehat{L}=\wh{L}_{1D}$ such that $G_{\chi}=L$. But every $\chi\in N_{\mt{K}}$ satisfies the above. Indeed, since $L$ is abelian, we have $L\leq G_{\chi}\leq G_{\rho(\chi)}=L$ for any $\chi\in N_{\mt{K}}$. This establishes that $G_{\chi}=L$. The set $N_{\mt K}$ is non-empty by Propositions \ref{density} and \ref{extension_of_char} so the proof is complete. \qedhere
\par

\end{proof}

Proposition \ref{prop:implies_crystal-like} and Example \ref{example:crystallographic_group_defn} implies that all crystallographic groups $G$ form crystal-like group-lattices $(G,\zn^r)$. Notice that in this case $L=C_G(\zn^r)=\Z^r$.

\begin{ex}

    Not all crystal-like group-lattice pairs arise from abelian centralizers. Let $H:=(\Z_n)^n$ for $n\geq3$ and consider the action $S_n\curvearrowright(\Z^r\times H)$ which is trivial on $\Z^r$ and where $\sigma\in S_n$ takes the $i^{\te{th}}$ coordinate to the $\sigma(i)^{\te{th}}$ coordinate on the elements of $H$. Using the induced semi-direct product $G:=(\Z^r\times H)\rtimes S_n$,  consider the group-lattice pair $(G,\ \Z^r\times H)$. By construction of the action, $(\Z^r\times1_H)\rtimes S_n\leq C_G(\Z^r\times1_H)$, so the centralizer is a non-abelian group.

    Under the identification $\wh{H}=H$, the character corresponding to the tuple $h:=(0,1,\dots,n-1)\in H$ is fixed only by $1_{S_n}$, so this character represents a principal orbit.
\end{ex}

Our next goal is to show that the representation theory of groups arising from crystal-like group lattices $(G,A)$ remembers $[G:A]$. We require a few initial results.

The following is a translation of \cite[Cor~7.15(3)]{CST22} which is justified by \cite[Prop~3.22]{CW24}.

\begin{prop}\label{prop:sum_dim_fixed_char}
    Let $(G,A)$ be a group-lattice pair. Fix $\chi\in \wh{A}$ and let $\widehat{G}^{(\chi)}_{\chi}=\{\sigma_1,...,\sigma_{\ell}\}$\footnote{$\wh{G}_{\chi}^{(\chi)}$ is a necessarily finite set as it is in bijection with irreducible representations arising from a finite group | see \cite[Prop 7.12]{CST22} and \cite[Thm 3.16]{CW24}.} (as in Theorem \ref{thm:Mackey_machine}). Then $\ds\sum_{i=1}^{\ell}(\dim\sigma_i)^2=[G_{\chi}:A]$.
\end{prop}

\begin{lemma}\label{lemma:dim_argument}
Let $(G,A)$ be a group-lattice pair. If $\pi\in\wh{G}$, then $\dim \pi\leq [G:A]$. Moreover, if $\dim\pi=[G:A]$ is maximized, then by setting $\pi=\te{\normalfont{ind}}_{G_{\chi}}^G\,\sigma$, where $\sigma\in\wh{G}_{\chi}^{(\chi)}$, we have $\dim\sigma=1$, $G_\chi=A$, and $\sigma=\chi\in\wh{A}$.

\end{lemma}

\begin{proof}
Let $\pi\in\wh{G}$. By the Mackey Machine, there exists $\chi\in\wh{A}$ and $\sigma\in\wh{G}_{\chi}^{(\chi)}$ such that $\pi=\te{\normalfont{ind}}_{G_{\chi}}^G\,\sigma$. Proposition \ref{prop:sum_dim_fixed_char} gives $\dim \sigma\leq [G_{\chi}:A]$. Thus, 
\[\dim \pi=\dim\te{ind}_{G_{\chi}}^G\,\sigma\leq [G:G_{\chi}]\,[G_{\chi}:A]=[G:A].\]

Suppose $\dim\te{ind}_{G_{\chi}}^G\,\sigma=[G:A]$.  Write
\begin{align*}
[G:A]&=\dim\te{ind}_{G_{\chi}}^G\,\sigma=[G:G_{\chi}]\,\dim\sigma\\
\Rightarrow \hspace{.15in} [G_{\chi}:A]&=\dim\sigma
\end{align*}
Again, by Proposition \ref{prop:sum_dim_fixed_char}, if $\wh{G}_{\chi}^{(\chi)}=\{\sigma_1,\dots,\sigma_{\ell}\}$ (where we assume, WLOG, $\sigma=\sigma_1$), then 
\[\sum_{i=1}^{\ell}(\dim \sigma_i)^2=[G_{\chi}:A].\]
Because $\dim\sigma_1=\dim\sigma=[G_{\chi}:A]$, we must have
\[[G_{\chi}:A]=\dim\sigma\leq(\dim \sigma)^2\leq [G_{\chi}:A].\]
As $\dim\sigma=(\dim\sigma)^2$, we conclude $\dim\sigma=1$. By definition, $\sigma\in \wh{G}_{\chi}^{(\chi)}$ means $\sigma|_A$ is a multiple of $\chi$. Since $1=[G_{\chi}:A]$, we have $G_{\chi}=A$ and so we have $\sigma|_A=\sigma=\chi$.
\end{proof}

\begin{prop}\label{prop:equivalence_faithful_action}
Suppose $(G,A)$ is a group-lattice pair. There exists $\pi\in\wh{G}$ with dimension equal to $[G:A]$ if and only if $(G\slash A)\curvearrowright \widehat{A}$ has a principal orbit.
\end{prop}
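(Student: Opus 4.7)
The plan is to prove both directions as direct applications of the Mackey Machine (Theorem~\ref{thm:Mackey_machine}) together with Lemma~\ref{lemma:dim_argument}, which has already done the bulk of the dimension bookkeeping.

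For the forward implication I would start from a $\pi \in \wh{G}$ with $\dim\pi = [G:A]$. Applying the Mackey Machine, write $\pi \simeq \te{ind}_{G_{\chi}}^G \sigma$ for some $\chi\in\wh{A}$ and $\sigma\in\wh{G}_{\chi}^{(\chi)}$. The second assertion of Lemma~\ref{lemma:dim_argument} then immediately forces $[G_{\chi}:A]=1$, i.e.\ $G_{\chi}=A$. By Remark~\ref{rmk:char_of_cryst_like}, this says exactly that $\chi$ represents a principal orbit under $(G/A)\curvearrowright \wh{A}$.

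For the reverse implication, suppose there exists $\chi\in\wh{A}$ with $|\mc{O}_{\chi}|=[G:A]$. By Remark~\ref{rmk:char_of_cryst_like}, $G_{\chi}=A$, so $\chi$ may itself be viewed as a one-dimensional element of $\wh{G_{\chi}}=\wh{A}$ whose restriction to $A$ is $\chi$, placing it in $\wh{G_{\chi}}^{(\chi)}$. The Mackey Machine guarantees that the induced representation
\[
\pi := \te{ind}_{G_{\chi}}^G \chi = \te{ind}_{A}^G \chi
\]
is irreducible. Its dimension is $[G:G_{\chi}]\cdot \dim\chi = [G:A]\cdot 1 = [G:A]$, producing the desired element of $\wh{G}$.

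Because Lemma~\ref{lemma:dim_argument} already packages the delicate step, namely that maximal dimension of an induced representation forces both the stabilizer and the stabilizer-level representation to be trivial, there is no genuine obstacle here; the proposition is essentially an unpacking of that lemma combined with the Mackey parametrization. The only thing to be careful about is checking that $\chi$ does indeed belong to $\wh{G_{\chi}}^{(\chi)}$ when $G_{\chi}=A$, which is immediate from the definition since $\chi|_A = \chi = \chi^{\oplus 1}$.
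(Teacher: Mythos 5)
Your proof is correct and follows essentially the same route as the paper: the forward direction applies the Mackey Machine and then Lemma~\ref{lemma:dim_argument} to force $G_{\chi}=A$, and the reverse direction induces the principal-orbit character $\chi$ from $A$ up to $G$ and counts dimensions. The only difference is that you explicitly verify $\chi\in\wh{G_{\chi}}^{(\chi)}$, a small point the paper leaves implicit.
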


\begin{proof}

($\Rightarrow$) Suppose there exists $\pi\in\wh{G}$ with $\dim \pi=[G:A]$. Then there exists $\chi\in\wh{A}$ and $\sigma\in\wh{G}_{\chi}^{(\chi)}$ such that $\pi=\te{ind}_{G_{\chi}}^G\,\sigma$. Because $[G:A]=\dim \pi$, Lemma \ref{lemma:dim_argument} implies  $G_{\chi}=A$. So there exists a principal orbit.

($\Leftarrow$) Assume that there exists $\chi\in\wh{A}$ with $|\mc{O}_{\chi}|=[G:A]$. We see that $G_{\chi}=A$ and so $\te{ind}_A^G \chi$ is an irreducible representation. Moreover, 
\begin{align*}
\dim\te{ind}_{G_{\chi}}^G\chi&=\dim\te{ind}_A^G\,\chi\\
&=[G:A]\cdot\dim\chi\\
&=[G:A].\qedhere
\end{align*}
\end{proof}

\begin{cor}\label{cor:max_dim}
 Suppose $(G,A)$ is a group-lattice pair. Then $(G,A)$ is crystal-like if and only if $$\max\{\dim\pi\,\colon\,\pi\in \widehat{G}\}=[G:A].$$
\end{cor}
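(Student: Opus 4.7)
The proposal is that this corollary is essentially a direct consequence of Lemma \ref{lemma:dim_argument} (which bounds $\dim\pi\le[G:A]$ for every $\pi\in\wh{G}$) and Proposition \ref{prop:equivalence_faithful_action} (which identifies when this bound is attained with the existence of a principal orbit). Since being crystal-like is defined by the existence of a principal orbit, there is almost no work beyond chaining the statements together.

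For the forward direction, the plan is to assume $(G,A)$ is crystal-like, which by Definition \ref{defn:crystal-like} (equivalently, by Remark \ref{rmk:char_of_cryst_like}) produces a character $\chi\in\wh{A}$ with $|\mc{O}_\chi|=[G:A]$. Proposition \ref{prop:equivalence_faithful_action} then delivers a $\pi\in\wh{G}$ with $\dim\pi=[G:A]$. Lemma \ref{lemma:dim_argument} shows this $[G:A]$ is a universal upper bound on $\dim\pi$ across $\wh{G}$, so the dimension $[G:A]$ is in fact the maximum, and the supremum is attained.

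For the reverse direction, assume the maximum of $\dim\pi$ over $\wh{G}$ equals $[G:A]$. Since this maximum is attained, there exists some $\pi\in\wh{G}$ with $\dim\pi=[G:A]$. Applying the other implication of Proposition \ref{prop:equivalence_faithful_action} gives a principal orbit for $(G/A)\curvearrowright\wh{A}$, so $(G,A)$ is crystal-like by definition.

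The only conceptual subtlety worth flagging is interpreting the ``$\max$'' in the statement: Lemma \ref{lemma:dim_argument} guarantees $[G:A]$ is a uniform upper bound on $\dim\pi$, so writing $\max$ (rather than $\sup$) is justified in the forward direction because the value is achieved, and in the reverse direction writing $\max$ is what lets us extract a representation actually realizing the dimension $[G:A]$. There is no genuine obstacle; the result is a two-line argument once the preceding proposition and lemma are in hand.
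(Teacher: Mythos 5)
Your proposal is correct and matches the paper's intent exactly: the corollary is stated without proof precisely because it follows immediately by combining the upper bound from Lemma \ref{lemma:dim_argument} with the attainment criterion of Proposition \ref{prop:equivalence_faithful_action} and the definition of crystal-like. Your remark on why ``max'' rather than ``sup'' is justified is a reasonable bit of extra care but raises no new issues.
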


It is possible for a crystal-like group to have two decompositions satisfying the assumptions of Definition \ref{defn:crystal-like}. However, the index $[G:A]$ is recovered.

\begin{cor}\label{cor:point_group_order}
Let $G$ be a group with group-lattice pairs $(G,A_1)$ and $(G,A_2)$. If both $(G,A_1)$ and $(G,A_2)$ are crystal-like, then $[G:A_1]=[G:A_2]$.
\end{cor}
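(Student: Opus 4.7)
The plan is to derive this as an immediate consequence of Corollary \ref{cor:max_dim}. That corollary characterizes crystal-likeness intrinsically in terms of the representation theory of $G$ alone, without reference to a specific lattice: namely, $(G,A)$ is crystal-like exactly when $\max\{\dim\pi:\pi\in\widehat{G}\}=[G:A]$.

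Concretely, I would argue as follows. Suppose $(G,A_1)$ and $(G,A_2)$ are both crystal-like group-lattice pairs. Applying Corollary \ref{cor:max_dim} to the pair $(G,A_1)$ yields
\[
[G:A_1]=\max\{\dim\pi:\pi\in\widehat{G}\},
\]
and applying it to $(G,A_2)$ yields
\[
[G:A_2]=\max\{\dim\pi:\pi\in\widehat{G}\}.
\]
The right-hand sides of these two equalities depend only on $G$ (and not on the chosen lattice), so they coincide, giving $[G:A_1]=[G:A_2]$ as claimed.

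There is no substantive obstacle, since all the representation-theoretic work (the Mackey Machine input, the dimension estimate in Lemma \ref{lemma:dim_argument}, and the equivalence in Proposition \ref{prop:equivalence_faithful_action}) has already been carried out to establish Corollary \ref{cor:max_dim}. The only point worth emphasizing in the write-up is precisely that the quantity $\max\{\dim\pi:\pi\in\widehat{G}\}$ is a group invariant of $G$ alone, so it cannot see the choice between $A_1$ and $A_2$. This also foreshadows the $C^*$-rigidity application: since $\widehat{G}\cong\widehat{C^*(G)}$ in a dimension-preserving way, the quantity $\max\{\dim\pi:\pi\in\widehat{G}\}$ is a $C^*$-isomorphism invariant, and hence so is the common index $[G:A]$ for any crystal-like decomposition.
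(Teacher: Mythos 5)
Your proof is correct and is exactly the argument the paper intends: Corollary \ref{cor:point_group_order} is stated without proof precisely because it follows immediately from applying Corollary \ref{cor:max_dim} to both pairs and observing that $\max\{\dim\pi:\pi\in\widehat{G}\}$ depends only on $G$. Nothing is missing.
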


We end the section by noticing that Lemma \ref{lemma:dim_argument} implies that the map $\Phi$ defined in Section \ref{sec:lower_bound}, is an isomorphism for every crystallographic group.

\begin{ex}\label{ex:crystallographic_phi_surj}
    (All the notation is as in Section \ref{sec:lower_bound}). Let $G$ be a crystallographic group. Then $L=C_G(\zn^r)=\Z^r$ (see Example \ref{example:crystallographic_group_defn}) and $\mt{K}=[G:L]$. Assume that $\pi\in\wh{G}$ with dimension $\mt{K}$. Lemma \ref{lemma:dim_argument}, along with the Mackey Machine (Theorem \ref{thm:Mackey_machine}), imply that $\pi=\te{ind}_{\Z^r}^G \chi$ for some $\chi\in \TT^r=\wh{L}_{1D}$ with $G_{\chi}=L$. Thus, $\Phi$ is surjective, and hence an isomorphism.
\end{ex}

\begin{cor}\label{crystal_like_order_of_point_group}
 Let $G,H$ be discrete, finitely generated groups such that $C^*(G)\cong C^*(H)$. If $(G,A_G)$ and $(H,A_H)$ are crystal-like group-lattice pairs, then $[G:A_G]=[H:A_H]$.
 \end{cor}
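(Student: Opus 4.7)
The plan is to reduce this corollary to Corollary \ref{cor:max_dim}, which characterizes crystal-like group-lattice pairs in terms of the maximum dimension of irreducible representations. The key observation is that this maximum dimension is a $C^*$-invariant.

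First I would verify that both maxima in question are finite. Since $(G,A_G)$ is a group-lattice pair, $A_G \trianglelefteq G$ is abelian with $[G:A_G] < \infty$, so $G$ is virtually abelian; likewise for $H$. Theorem \ref{note:subhomogeneous} then guarantees that both $C^*(G)$ and $C^*(H)$ are subhomogeneous, and via the bijection $\widehat{G} \approx \widehat{C^*(G)}$ (and similarly for $H$) this ensures the finiteness of $\max\{\dim \pi : \pi \in \widehat{G}\}$ and $\max\{\dim \sigma : \sigma \in \widehat{H}\}$.

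Next, I would argue that a $*$-isomorphism $\Psi : C^*(G) \xrightarrow{\cong} C^*(H)$ induces a bijection $\widehat{C^*(H)} \to \widehat{C^*(G)}$ via $[\pi] \mapsto [\pi \circ \Psi]$, and this bijection preserves the dimension of the representing Hilbert space. Combined with the dimension-preserving correspondence between $\widehat{G}$ and $\widehat{C^*(G)}$ discussed in the Preliminaries, this yields
\[
\max\{\dim \pi : \pi \in \widehat{G}\} \;=\; \max\{\dim \sigma : \sigma \in \widehat{H}\}.
\]

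Finally, applying Corollary \ref{cor:max_dim} to each side identifies the left-hand maximum with $[G:A_G]$ and the right-hand maximum with $[H:A_H]$, giving the desired equality $[G:A_G] = [H:A_H]$. There is no real obstacle here: all the work was done in establishing Corollary \ref{cor:max_dim}, and the present statement is essentially the observation that the quantity appearing in that corollary is manifestly a $C^*$-invariant.
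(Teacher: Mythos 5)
Your proposal is correct and follows essentially the same route as the paper's own proof: both deduce from the $C^*$-isomorphism a dimension-preserving identification of $\widehat{G}$ with $\widehat{H}$ and then apply Corollary \ref{cor:max_dim} to both sides. Your extra remark on finiteness of the maxima via subhomogeneity is a harmless elaboration that the paper leaves implicit.
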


 \begin{proof}
Since $C^*(G)\cong C^*(H)$, we also have $\wh{G}\cong \wh{H}$. By Corollary \ref{cor:max_dim}, 
\[[G:A_G]=\max\{\dim\pi_G\,\colon\, \pi_G\in \widehat{G}\}=\max\{\dim\pi_H\,\colon\, \pi_H\in \widehat{H}\}=[H:A_H].\qedhere\]
 \end{proof}

\begin{ex}
A group $C^*$-algebra $C^*(G)$ arising from groups $G$ which admit crystal-like group-lattices $(G,A)$ need not recover the isomorphism class of $A$. Consider 
\[G_1=(\Z^r\times\zn_2\times\zn_2)\rtimes_{\alpha}\zn_2 \quad\te{ and }\quad G_2=(\Z^r\times\zn_4)\rtimes_{\beta}\zn_2\]
with $\alpha(z,a,b)=(z,b,a)$ and $\beta(z,x)=(z,x^{-1})$. $G_1$ and $G_2$ are not isomorphic because $G_1$ does not contain an element of order 4 while $G_2$ does. We show that $C^*(G_1)$ is $C^*$-isomorphic to $C^*(G_2)$.

We first observe that, because $G_1,G_2$ are semidirect products by finite groups, their group $C^*$-algebras are crossed-products (see \cite{phi17} for a comprehensive introduction). We then have $C^*$-isomorphisms
\[C^*(G_1)\cong \left( C^*(\Z^r)\otimes C^*(\Z_2\times\Z_2)\right)\rtimes_{\hat{\alpha}} \Z_2\quad C^*(G_2)\cong \left( C^*(\Z^r)\otimes C^*(\Z_4)\right)\rtimes_{\hat{\beta}} \Z_2. \]

We recall the well-known $C^*$-isomorphism $\theta:C^*(\Z_2\times \Z_2)\to C^*(\Z_4)$ defined by $\theta(a)= \frac{x-ix^3}{1-i}$ and $\theta(b)=\frac{x^3-ix}{1-i}$ for $a$ and $b$ the generating unitaries of order $2$ and $x$ the generating unitary of order $4$. Noticing that $\theta \hat \alpha=\hat \beta \theta$, a standard argument utilizing the universal property of crossed product $C^*$-algebras provides the $C^*$-isomorphism $C^*(G_1)\cong C^*(G_2)$.
\end{ex}

\section{$C^*$-(super)rigidity properties for crystallographic groups}
 
 In this section, we refine our gaze to crystallographic groups. Notice that Theorem \ref{main_result_2} together with Theorem \ref{note:subhomogeneous} imply that if $G$ is a finitely generated virtually abelian group and $H$ a finitely generated discrete group such that $C^*(G)\cong C^*(H)$, then $H$ is also virtually abelian and $h(G)=h(H)$. In other words, the Hirsch length of a finitely generated, virtually abelian group is recovered by its group $C^*$-algebra. Within the context of crystallographic groups, this means that the dimension is $C^*$-invariant.
 
The primary goal of this section is to show that ``crystallographic" is a property remembered by a group's associated $C^*$-algebra (see Theorem \ref{crystallographic_is_preserved} for the exact statement). Once that is accomplished, Section \ref{sec:crystal_like_groups} may be invoked with the $K$-theory computations of \cite{yang1998crossed} and easily computable abelianizations of each group to prove that all $17$ wallpaper groups are $C^*$-superrigid.

We denote the center of a $C^*$-algebra $A$ by $\ZZ(A)$. 

For any group $G$, we write $C_x$ to denote the conjugacy class of $x\in G$. An \emph{$FC$-group} is a group where every element has finite conjugacy classes (i.e., $|C_x|<\infty$ for all $x\in G$). Finite groups, as well as abelian groups, are examples of $FC$-groups. For an arbitrary group $G$, its \emph{$FC$-center} (denoted $FC(G)$), is the set of all elements of $G$ that have finite conjugacy class. We note that $FC(G)$ is a characteristic (hence normal) subgroup of $G$ (\cite[14.5.5]{robinson_group_theory_book}) and is, by definition, an $FC$-group in its own right.

In the case that $G$ is an $FC$-group, the elements of $G$ of finite order form a characteristic subgroup which will call $Tor(G)$. We observe that $Tor(G)$ is locally finite (\cite[14.5.7]{robinson_group_theory_book}). Moreover, a torsion free $FC$-group is automatically abelian (\cite[Prop. 2.5]{tf_2_step_c*_superrigid}).

A key property which characterizes the $C^*$-algebra $C^*(G)$ of a crystallographic group is that its center contains only trivial projections. We start by connecting this $C^*$-algebraic property with a group property.

\begin{prop}\label{characterization of projectionless center}
    Let $G$ be a discrete group. The following are equivalent:
    \begin{enumerate}[label=(\roman*).]
        \item $P(\ZZ(C^*(G)))=\{0,1\}$
        \item $FC(G)$ is torsion-free.
    \end{enumerate}
    \begin{proof} {\em ((i)$\Rightarrow$(ii))}:  By way of contraposition, suppose that $FC(G)$ has torsion. Set $T=Tor(FC(G))$ where, by assumption, $T\neq \{1_T\}$. Let $x\in T\setminus\{1_T\}$ and consider $y:=\sum_{g\in C_x}g\in C^*(G)$. Notice that $y\in \ZZ(C^*(G))$. Indeed, a direct computation shows that $y$ commutes with all $g\in G$ and so $y$ is in the center of $C^*(G)$ by the fact that $C^*(G)$ is generated (as a $C^*$-algebra) from the elements of $G$. Moreover, because $C_x$ is a finite set, $y\in C^*(T_0)$ where $T_0$ is a finitely generated subgroup of $T$. But $T$ is locally finite, hence $T_0$ must be finite. It follows that $A:=\ZZ(C^*(G))\cap C^*(T_0)$ is a finite dimensional $C^*$-subalgebra of $\ZZ(C^*(G))$ and $A\neq \C$ (because $y\in A$). Hence $\ZZ(C^*(G))$ must have non-trivial projections. \par

{\em ((ii)$\Rightarrow$(i))}: Assume that $FC(G)$ is torsion free. Because $FC(G)$ is an $FC$-group, it must be abelian. Hence, its Pontryagin dual, $\widehat{FC(G)}$, is connected. This implies that $C^*(FC(G))\cong C(\widehat{FC(G)})$ has no non-trivial projections. Moreover, $\ZZ(C^*(G))\subseteq C^*(FC(G))$ by the proof of \cite[Prop. 2.6]{tf_2_step_c*_superrigid}. It follows that $P(\ZZ(C^*(G)))=\{0,1\}$.
    \end{proof}
\end{prop}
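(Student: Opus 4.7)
The plan is to handle the two directions separately using four standard ingredients: (a) the containment $\ZZ(C^*(G)) \subseteq C^*(FC(G))$ established inside the proof of \cite[Prop. 2.6]{tf_2_step_c*_superrigid}; (b) the fact that every torsion-free FC-group is abelian \cite[Prop. 2.5]{tf_2_step_c*_superrigid}; (c) the local finiteness of the torsion subgroup of an FC-group \cite[14.5.7]{robinson_group_theory_book}; and (d) Pontryagin duality's equivalence, for a discrete abelian group, between being torsion-free and having connected dual.

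For the direction (ii)$\Rightarrow$(i), I would proceed directly. If $FC(G)$ is torsion-free, then (b) makes it abelian, so $C^*(FC(G)) \cong C(\widehat{FC(G)})$, and (d) makes this dual connected, which forces $C(\widehat{FC(G)})$ to be projectionless apart from $0$ and $1$. The inclusion (a) then transfers the projectionlessness to $\ZZ(C^*(G))$, since any projection in a $C^*$-subalgebra remains a projection in the ambient algebra.

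For the direction (i)$\Rightarrow$(ii), I would prove the contrapositive. Let $1_G \neq x \in FC(G)$ be a torsion element and form the class sum $y := \sum_{g \in C_x} g \in \C[G]$; this is well-defined since $|C_x| < \infty$, and a direct calculation using $h C_x h^{-1} = C_x$ shows $y \in \ZZ(C^*(G))$. Now $H := \langle C_x\rangle$ is a finitely generated subgroup of $FC(G)$ generated by torsion elements, hence finite by (c). Therefore $C^*(H)$ is finite-dimensional, and $A := \ZZ(C^*(G)) \cap C^*(H)$ is a commutative finite-dimensional $C^*$-subalgebra of the center containing $y$. Because $y$'s expansion in $\C[G]$ has no $1_G$-component and $\C[G] \hookrightarrow C^*(G)$ is injective (the regular representation, or any faithful direct sum of representations, detects $y$), we have $y \notin \C \cdot 1$. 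Hence $A \cong \C^n$ with $n \geq 2$ and contains non-trivial projections, which are then non-trivial projections of $\ZZ(C^*(G))$.

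The main subtlety is extracting an honestly non-scalar central element from the hypothesis that $FC(G)$ has torsion. The class sum is the natural candidate, but making it work requires both ingredients of the contrapositive argument simultaneously: the local-finiteness step (c), which ensures $y$ lives in a finite-dimensional subalgebra so that the commutative finite-dimensional $C^*$-algebra $A$ splits as $\C^n$, and the injectivity of $\C[G] \hookrightarrow C^*(G)$, which ensures $y$ does not collapse to a scalar. Once these are in place, the rest is routine finite-dimensional commutative $C^*$-algebra theory.
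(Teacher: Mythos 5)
Your proposal is correct and follows essentially the same route as the paper: the class-sum element $y=\sum_{g\in C_x}g$ trapped in a finite-dimensional central subalgebra via local finiteness of $Tor(FC(G))$ for one direction, and the connectedness of $\widehat{FC(G)}$ together with $\ZZ(C^*(G))\subseteq C^*(FC(G))$ for the other. Your explicit justification that $y\notin\C\cdot 1$ (no $1_G$-component plus injectivity of $\C[G]\hookrightarrow C^*(G)$) is a welcome detail the paper leaves implicit.
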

The following Lemma is well-known to experts, but we present a proof for the sake of completion.
\begin{lemma}\label{fixed point subgroup of non-trivial autom}
 Let $N$ be a torsion free abelian group and $\sigma\in Aut(N)$ be an automorphism. Assume that the fixed point subgroup of $\sigma$ has finite index in $N$. Then $\sigma=id$.
 \begin{proof}
     Notice that $\phi:N\to N$ defined via $\phi(x)=x(\sigma(x))^{-1}$ is a well-defined group homomorphism with finite image. The latter is true because $\phi$ factors through the fixed point subgroup of $\sigma$.
     Since $\phi(N)$ is a subgroup of the torsion free group $N$, it has to be torsion free. It follows that $\phi(N)=\{1_{N}\}$. Hence, for every $x \in N$ we have $x(\sigma(x))^{-1}=\{1_N\} \Rightarrow \sigma(x)=x$.

 \end{proof}

\end{lemma}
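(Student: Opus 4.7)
The plan is to show that the ``obstruction-to-fixedness'' map vanishes identically, using torsion-freeness of $N$ as the crucial input. I would define $\phi\colon N\to N$ by $\phi(x)=x\sigma(x)^{-1}$. Because $N$ is abelian, a short calculation shows $\phi$ is a group homomorphism:
\[\phi(xy)=xy\,\sigma(xy)^{-1}=xy\,\sigma(y)^{-1}\sigma(x)^{-1}=x\sigma(x)^{-1}\cdot y\sigma(y)^{-1}=\phi(x)\phi(y),\]
where commutativity of $N$ is used in the third equality. (In non-abelian settings this map would merely be a crossed homomorphism, which is exactly why the hypothesis of abelianness cannot be dropped.)

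Next I would identify the kernel of $\phi$: by definition, $\phi(x)=1_N$ if and only if $\sigma(x)=x$, so $\ker\phi$ is precisely the fixed point subgroup $N^\sigma$. The first isomorphism theorem then gives $\phi(N)\cong N/N^\sigma$. By hypothesis $[N:N^\sigma]<\infty$, so $\phi(N)$ is a finite subgroup of $N$. Since $N$ is torsion-free, its only finite subgroup is trivial, hence $\phi(N)=\{1_N\}$. Unwinding this, $x\sigma(x)^{-1}=1_N$ for every $x\in N$, i.e. $\sigma=\mathrm{id}$.

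The whole argument reduces to two observations: (i) the correct map to consider is $x\mapsto x\sigma(x)^{-1}$, which becomes a homomorphism exactly under the abelian hypothesis, and (ii) a finite-image homomorphism into a torsion-free group is trivial. I do not anticipate any real obstacle; the only conceptual step is spotting the right homomorphism, after which the conclusion is immediate.
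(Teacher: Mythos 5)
Your proposal is correct and follows essentially the same route as the paper: both define $\phi(x)=x\sigma(x)^{-1}$, note it is a homomorphism with finite image isomorphic to $N/N^\sigma$, and conclude from torsion-freeness that the image is trivial. You simply spell out the verification that $\phi$ is a homomorphism and the kernel identification in more detail than the paper does.
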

We now characterize the virtually abelian groups that have torsion-free $FC$-center. 
\begin{prop}\label{characterization of tf fc center}
    Let $G$ be a virtually abelian group. The following are equivalent:
    \begin{enumerate}[label=(\roman*).]
        \item $FC(G)$ is torsion free.
        \item $G$ has a torsion free, normal subgroup $N$ that is maximally abelian in $G$, and $[G:N]<\infty.$
        \item $FC(G)$ is a torsion free, maximally abelian in $G$, and $[G:FC(G)]<\infty$.
    \end{enumerate}
    \begin{proof}
        {\em ((i)$\Rightarrow$ (ii))}: Assume that $FC(G)$ is torsion free. Because $G$ is virtually abelian, there exists $N\unlhd G$ where $N$ is abelian and $[G:N]<\infty.$ Because $N\subseteq FC(G)$, it follows that $N$ must be torsion free. Set $L:=C_G(N)$. If $L=N$, then $L$ is maximally abelian in $G$, hence we are done. Assume now that $L\gneq N$. Notice that $L\leq FC(G)$, hence $L$ is torsion free. Thus $L$ is a torsion free, $FC$ group, so it has to be abelian. Notice that $[G:L]<\infty$ and that $L$ is maximally abelian in $G$ by construction. \par
        {\em ((ii)$\Rightarrow$(iii))}: Let $N$ be as in the assumption. It is enough to show that $FC(G)=N$. Because the action via conjugation $G\curvearrowright N$ factors through the finite group $G/N$ (see also Section 2.4), we deduce that  $N\leq FC(G)$. Assume for the sake of contradiction that there exists $x\notin N$ with finite conjugacy class. Because $N$ is maximally abelian in $G$, the automorphism $\sigma_x:N\to N$ defined via $\sigma_x(n)=x^{-1}nx$ is not trivial. Moreover, its fixed point set has finite index in $N$. However, this cannot happen because of Lemma \ref{fixed point subgroup of non-trivial autom}, and the fact that $N$ is torsion free abelian. \par
        {\em ((iii)$\Rightarrow$(i))}: This is immediate.
    \end{proof}
\end{prop}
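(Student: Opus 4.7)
The plan is to prove the two implications separately, leveraging virtual abelianness to produce a normal abelian subgroup of finite index, enlarging it to its centralizer, and appealing both to Lemma \ref{fixed point subgroup of non-trivial autom} and to the fact (used earlier in the paper) that a torsion-free FC-group is abelian (\cite[Prop. 2.5]{tf_2_step_c*_superrigid}).

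For (i) $\Rightarrow$ (ii), I would first extract from virtual abelianness a normal abelian $A \unlhd G$ of finite index. Every $a \in A$ has $G$-conjugacy class of size at most $[G:A]$, so $A \leq FC(G)$ is torsion-free. Set $L := C_G(A)$. Then $L \unlhd G$ contains $A$ and satisfies $[G:L] \leq [G:A] < \infty$; moreover, every $g \in L$ centralizes $A$, so $C_G(g) \supseteq A$, whence $g \in FC(G)$ and $L$ is torsion-free. Because $A \leq Z(L)$ with $[L:A] < \infty$, we have $[L:Z(L)] < \infty$, so $L$ is an FC-group, and torsion-freeness then forces $L$ to be abelian via the cited result. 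Lastly, $L$ is maximally abelian in $G$: any $g \in G$ centralizing $L$ must in particular centralize $A \leq L$, placing $g \in C_G(A) = L$. Taking $N := L$ yields the required subgroup.

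For (ii) $\Rightarrow$ (i), assume such an $N$ exists. Certainly $N \leq FC(G)$ since $C_G(n) \supseteq N$ for every $n \in N$. Suppose for contradiction that there exists $x \in FC(G) \setminus N$. Maximal abelianness of $N$ gives $N = C_G(N)$, so $x$ fails to centralize some element of $N$, making the conjugation automorphism $\sigma_x \in \mr{Aut}(N)$ defined by $\sigma_x(n) := x^{-1}nx$ nontrivial. On the other hand, $[G:C_G(x)] < \infty$ forces $\mr{Fix}(\sigma_x) = N \cap C_G(x)$ to have finite index in $N$. Since $N$ is torsion-free abelian, Lemma \ref{fixed point subgroup of non-trivial autom} yields $\sigma_x = \mr{id}$, a contradiction. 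Hence $FC(G) = N$ is torsion-free.

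The main subtle point is confirming in the forward direction that the centralizer $L$ is genuinely an FC-group so that the torsion-free-FC-implies-abelian result applies: this needs the observation that $A$ lies in $Z(L)$ with finite index, from which the standard bound on conjugacy classes in a group with finite-index center gives the FC property. All other steps amount to routine index arithmetic and direct appeals to the cited Lemma.
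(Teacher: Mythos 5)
Your proof is correct and follows essentially the same route as the paper's: pass from a normal abelian finite-index subgroup to its centralizer $L$, use that a torsion-free FC-group is abelian, and invoke Lemma \ref{fixed point subgroup of non-trivial autom} for the converse. The only (harmless) variations are that you justify the FC property of $L$ via the finite-index center $A\leq Z(L)$ rather than directly from $L\leq FC(G)$, and that you spell out the maximal abelianness check $C_G(L)\leq C_G(A)=L$ which the paper leaves as ``by construction.''
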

We also need to explicitly compute the center of the $C^*$-algebra of certain virtually abelian groups. We note that this result will appear on the in-preparation paper \cite{Knubyetal}. We would like to thank Jakub Curda for pointing out this result to us.
\begin{prop}\label{computing the center of certain va groups}
    Let $G$ be a virtually abelian group, and $N\unlhd G$ such that $N$ is torsion free, maximally abelian and $D:=G/N$ is a finite group. Then $\ZZ(C^*(G))=C(\widehat{N}/D)$.
    \begin{proof}
        Proposition \ref{characterization of tf fc center}{\em ((ii)$\Rightarrow$(iii))} implies that $FC(G)=N$. Because $N$ is torsion free, by the proof of \cite[Prop. 2.6]{tf_2_step_c*_superrigid}, it follows that 
        \[\ZZ(C^*(G))=C^*(N)^D=\left\{\sum_{h\in N}\lambda_h \delta_h\,\colon\, \lambda_h\text{ is constant on conjugacy classes}\right\}\]
        Let $\mathcal{F}:C^*(N)\rightarrow C(\widehat{N})$ be the Fourier transform. For $x=\sum_{h\in N}\lambda_h \delta_h$, $\chi \in \widehat{N}$ and $g\in D$, we have that
        \[\mathcal{F}(x)(g\cdot \chi)=\sum_{h\in N}\lambda_h(g\cdot \chi)(h)=\sum_{h\in N}\lambda_h\chi(\gamma(g)h\gamma(g)^{-1})=\sum_{h\in N}\lambda_{\gamma(g)^{-1}h\gamma(g)}\chi(h).\]
        It follows that $\lambda_h$ is constant on conjugacy classes if and only if $x$ satisfies the fact that $\mathcal{F}(x)(\chi)=\mathcal{F}(x)(g\cdot \chi)$ for every $\chi\in \widehat{N}$ and every $g\in D$. It follows that, after identifying $C^*(N)$ with $C(\widehat{N})$ via the Fourier transform, 
        \[C^*(N)^D=\left\{f\in C(\widehat{N}) \,\colon\, f(\chi)=f(g\cdot \chi)\text{ for every }g\in D \text{ and }\chi\in \widehat{N}\right\}.\] Observe that the right hand side on the above equality is isomorphic to $C^*(\widehat{N}/D)$ so proof is complete.
    \end{proof}
\end{prop}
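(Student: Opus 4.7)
The plan is to identify $\ZZ(C^*(G))$ with the $D$-invariant subalgebra of $C^*(N)$, and then transport that identification across the Fourier transform to obtain $C(\wh{N}/D)$.

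First I would show that $\ZZ(C^*(G)) \subseteq C^*(N)$. The hypotheses on $N$ together with Proposition \ref{characterization of tf fc center} imply $FC(G) = N$ (one direction is built into the statement; the other uses that any $x \notin N$ has an infinite conjugacy class, per Lemma \ref{fixed point subgroup of non-trivial autom} applied to the conjugation automorphism). Combining this with the general containment $\ZZ(C^*(G)) \subseteq C^*(FC(G))$ from \cite[Prop.~2.6]{tf_2_step_c*_superrigid} gives the claim. This is also the step I expect to be the most technically delicate, since the containment in $C^*(FC(G))$ rests on a non-trivial $\ell^1$-type argument about supports of central elements.

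Next I would characterize the center inside $C^*(N)$ explicitly. Write a generic element as $x = \sum_{h \in N} \lambda_h \delta_h$. Since $N \unlhd G$, conjugation by $g \in G$ permutes $N$ via the automorphism induced by the image of $g$ in $D$, so any set-theoretic section $\gamma: D \to G$ gives a $D$-action on $C^*(N)$. Requiring that $x$ commute with every generator $g \in G$ translates directly to the condition that $\lambda_h$ be constant on each $G$-conjugacy class intersected with $N$, i.e., on each $D$-orbit. Hence
\[
\ZZ(C^*(G)) = C^*(N)^D.
\]

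Finally, applying the Fourier transform $\mc{F}: C^*(N) \xrightarrow{\cong} C(\wh{N})$, I would compute
\[
\mc{F}(x)(g \cdot \chi) = \sum_{h \in N} \lambda_h\, \chi\bigl(\gamma(g)^{-1} h \gamma(g)\bigr) = \sum_{h \in N} \lambda_{\gamma(g) h \gamma(g)^{-1}}\, \chi(h),
\]
after reindexing. This shows that $\lambda_h$ is constant on $G$-conjugacy classes in $N$ precisely when $\mc{F}(x)$ is $D$-invariant as a function on $\wh{N}$. Since $D$ is finite, all orbits are finite and the quotient map $\wh{N} \to \wh{N}/D$ is a proper quotient of compact Hausdorff spaces, yielding the canonical identification $C(\wh{N})^D \cong C(\wh{N}/D)$. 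Chaining the isomorphisms produces the desired $\ZZ(C^*(G)) \cong C(\wh{N}/D)$. Aside from the initial containment into $C^*(FC(G))$, the remaining steps are formal manipulations with Pontryagin duality.
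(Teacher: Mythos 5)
Your proposal is correct and follows essentially the same route as the paper's own proof: establish $FC(G)=N$ via Proposition \ref{characterization of tf fc center}, invoke \cite[Prop.~2.6]{tf_2_step_c*_superrigid} to reduce to $C^*(N)^D$ with coefficients constant on conjugacy classes, and then transport this through the Fourier transform to identify $C(\wh{N})^D\cong C(\wh{N}/D)$. The only difference is an immaterial choice of convention in the reindexing step of the Fourier computation.
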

  Let $G$ be a finitely generated, virtually abelian group and $N\unlhd G$ with $N\cong \Z^r$ and $[G:N]<\infty$. \par
  Set $$S:=\{\chi \in \widehat{N}: \, |\mc{O}_{\chi}|=|G/N|\}\subseteq \widehat{N}$$ Observe that $D:=G/N$ acts freely, and by homeomorphisms\footnote{The explanation for this is identical to the one that is presented below Lemma \ref{desired orbit is open} for a different action.} on $S$. 
   Moreover, $S$ is open and dense in $\widehat{N}.$ Indeed, open follows almost identically as Lemma \ref{desired orbit is open} (see also \cite[Prop. 4.12]{CW24}). Density follows almost identically as Proposition \ref{density} (see also \cite[Lemma 2.1]{strongly_qd_eckhardt}). We show that the quotient map $q:S\to S/D$ is a local homeomorphism. This is well-known to experts, but we present a proof of the following more general result for the sake of completion.

\begin{prop}\label{actions with finite groups give local isom}
Let $D$ be a finite group acting freely and by homeomorphisms on a locally compact, metrizable space $X$. Then the quotient map $q:X\rightarrow X/D$ is a local homeomorphism.
\begin{proof}
    Let $d$ be a metric making $X$ metrizable. We already know that $q$ is continuous, open, closed and surjective. So, it is enough to show that for every $x\in X$, there exists an open neighborhood $U_x$ such that the restriction of $q$ on $U_x$ is injective. However, injectivity of $q$ on $U_x$ is equivalent to saying that no two elements of $U_x$ are on the same orbit. Fix $x\in X$ and let 
    \[m:=\min\{d(x,y)\colon y \text{ is on the orbit of }x \}.\]
    Because $|D|<\infty$ and the action is free, $m$ is well-defined and $m>0$. \par
    For every $g\in D$, consider $V_{g,x}:=B(gx, \frac{m}{4})$. Because $D\curvearrowright X$ is an action by homeomorphisms, $g^{-1}V_{g,x}$ is open. Hence, there is a $\delta_g>0$, such that $B(x,\delta_g)\subset g^{-1}V_{g,x}$. Set $$\delta:=\min\l\{\delta_g: g\in D, \frac{m}{4} \r\}$$
    We will show the desired property for $U_x:=B(x,\delta)$. Let $y\in B(x,\delta)$ and it notice that it is enough to show that $d(y,gy)>2\delta$ for every $g\in D$. For every $g\in D$ we have
    $$d(y,gy)\geq d(x,gx)-d(x,y)-d(gx,gy)$$
    
    $$d(y,gy)>m-\frac{m}{2}-\frac{m}{2}=\frac{m}{2}\geq 2\delta$$
    \begin{itemize}
        \item For the first inequality we used the reverse triangle inequality.
        \item For the second inequality we use that $y\in B(x,\delta)$, so $d(x,y)<\delta\leq \frac{m}{4}$. We also used the fact that $gB(x,\delta)\subset gB(x,\delta_g)\subset gg^{-1}V_{g,x}=V_{g,x}$. So $d(gx,gy)<\frac{m}{4}$.
        \item For the last inequality we used that $\delta\leq \frac{m}{4}$ by construction.
    \end{itemize}
    
\end{proof}
\end{prop}
We also need the following Remark.
\begin{rmk}\label{locally_euclidean_homogeneous}
    Let $G$ be a topological group and assume that there exists $g_0\in G$ that has a Euclidean neighborhood, say $V_0$. Notice that for every $g\in G$, $V_g=gg_0^{-1}V_0$ is a Euclidean neighborhood of $g$. It follows that $G$ is locally Euclidean.
\end{rmk}
 We are now ready to prove the first of our main results in this section.

 It has to be noted that some of the arguments used in the proof below are also used in a Theorem of the in-preparation paper \cite{Knubyetal}.
\begin{thm}\label{crystallographic_is_preserved}
    Let $G$ be a crystallographic group and $H$ a discrete group such that $C^*(G)\cong C^*(H)$. Then 
    \begin{enumerate}[label=(\roman*.)]
        \item $H$ is crystallographic.
        \item $h(G)=h(H)$.
        \item If $D_1$ and $D_2$ are the point groups of $G$ and $H$, respectively, then $|D_1|=|D_2|$.
    \end{enumerate}
    
    \begin{proof}\mbox{}
    
        {\em (i.)} First of all, notice that $H$ is virtually abelian by \cite{Tho64}. Proposition \ref{characterization of tf fc center} implies that $FC(G)$ is torsion free, and thus $P(\ZZ(C^*(H)))=P(\ZZ(C^*(G)))=\{0,1\}$ by Proposition \ref{characterization of projectionless center}. Again, Propositions \ref{characterization of projectionless center} and \ref{characterization of tf fc center}, applied to $H$ this time, imply that there exists a torsion free, normal $L$ such that it is a maximally abelian subgroup of $H$, and $D_2:=H/L$ is finite. \par
        Because $G$ is crystallographic, it has a normal, maximally abelian subgroup $N\cong \Z^r$ such that $D_1:=G/N$ is finite. By applying Proposition \ref{computing the center of certain va groups} to both $G$ and $H$, we deduce that
        $$C(\widehat{N}/D_1)\cong \ZZ(C^*(G))\cong \ZZ(C^*(H))\cong C(\widehat{L}/D_2)$$
        Because $\widehat{N}/D_1$ and $\widehat{L}/D_2$ are both compact and Hausdorff, it follows that 
        \begin{equation}\label{1}
           \widehat{N}/D_1\cong \widehat{L}/D_2. 
        \end{equation}
        
        Notice that $\widehat{N}\cong \TT^r$ is locally Euclidean. Hence, Proposition \ref{actions with finite groups give local isom}, the comments above it, and (\ref{1}) imply that $\widehat{L}/D_2$ contains an open dense subset that it is locally Euclidean. Again, Proposition \ref{actions with finite groups give local isom}, and the comments above it imply that $\widehat{L}$ has an open dense subset that is locally Euclidean. Now, the fact that $\widehat{L}$ is locally Euclidean follows from Remark \ref{locally_euclidean_homogeneous}. Recall that $\widehat{L}$ is also a connected (because $L$ is torsion-free), compact, abelian group. It follows from the structure theory for locally compact abelian groups \cite[Thm. 4.2.4]{principles_of_harm_anal_book} that $\widehat{L}\cong \TT^s$. Thus $L\cong \Z^s$, which implies that $H$ is crystallographic. 
        
        {\em (ii.)} This follows from Theorem \ref{main_result_2}, However, we can also prove it as follows:
\[h(G)=r=\dim(\widehat{N}/D_1)=\dim(\widehat{L}/D_2)=\dim(\widehat{L})=s=h(H).\]

        {\em (iii.)} This follows from Corollary \ref{crystal_like_order_of_point_group} and the fact that crystallographic groups form crystal-like lattice pairs.
    \end{proof}
\end{thm}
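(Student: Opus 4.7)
The plan is to use the center $\ZZ(C^*(G))$ as the bridge that carries structural information from $G$ to $H$. Since $G$ is crystallographic, $\Z^r \trianglelefteq G$ is torsion-free, maximally abelian, and of finite index, so Proposition \ref{characterization of tf fc center} gives that $FC(G)$ is torsion-free, whence Proposition \ref{characterization of projectionless center} yields $P(\ZZ(C^*(G)))=\{0,1\}$. This projection property transports across $C^*(G)\cong C^*(H)$, and since $H$ is virtually abelian by \cite{Tho64}, running Propositions \ref{characterization of projectionless center} and \ref{characterization of tf fc center} in reverse on $H$ produces a torsion-free, maximally abelian, normal subgroup $L\trianglelefteq H$ with finite quotient $D_2:=H/L$. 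The group-theoretic setup on the $H$-side now matches that of the $G$-side, except we do not yet know $L\cong \Z^s$.

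Next I would apply Proposition \ref{computing the center of certain va groups} to both groups. This gives
\[
C(\widehat{\Z^r}/D_1)\cong \ZZ(C^*(G)) \cong \ZZ(C^*(H)) \cong C(\widehat{L}/D_2),
\]
and Gelfand duality yields a homeomorphism $\widehat{\Z^r}/D_1 \cong \widehat{L}/D_2$ of compact Hausdorff spaces. The task is now purely topological: transfer the manifold-like nature of $\widehat{\Z^r}=\T^r$ through this homeomorphism to force $\widehat{L}$ itself to be a torus.

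The crux is the local homogeneity argument. Proposition \ref{density} shows the principal orbit locus $S\subseteq \T^r$ is open and dense, and on $S$ the action of $D_1$ is free and isometric, so Proposition \ref{actions with finite groups give local isom} makes $S\to S/D_1$ a local homeomorphism. Thus $\widehat{\Z^r}/D_1$ contains an open dense subset that is locally Euclidean of dimension $r$. Transporting this across the homeomorphism and then pulling back through the quotient $\widehat{L}\to \widehat{L}/D_2$ (again using Proposition \ref{actions with finite groups give local isom} on the principal orbit locus of the $D_2$-action) produces an open dense locally Euclidean subset of $\widehat{L}$. By Remark \ref{locally_euclidean_homogeneous}, $\widehat{L}$ is locally Euclidean everywhere. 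Because $L$ is torsion-free abelian, $\widehat{L}$ is compact, connected, and abelian, so the structure theorem for locally compact abelian groups (\cite{principles_of_harm_anal_book}, Thm. 4.2.4) forces $\widehat{L}\cong \T^s$, hence $L\cong \Z^s$ and $H$ is crystallographic, proving (i).

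Parts (ii) and (iii) then come essentially for free: (ii) follows from Theorem \ref{main_result_2} applied to both sides, or alternatively from the dimension chain $r=\dim(\widehat{\Z^r}/D_1)=\dim(\widehat{L}/D_2)=\dim(\widehat{L})=s$; and (iii) follows because $(G,\Z^r)$ and $(H,L)$ are both crystal-like group-lattice pairs (by Proposition \ref{prop:implies_crystal-like} together with Example \ref{example:crystallographic_group_defn}), so Corollary \ref{crystal_like_order_of_point_group} gives $|D_1|=[G:\Z^r]=[H:L]=|D_2|$. The main obstacle is the topological step: without the local homeomorphism result (Proposition \ref{actions with finite groups give local isom}) and the homogeneity trick in Remark \ref{locally_euclidean_homogeneous}, it is not at all obvious how to pass Euclidean structure through two non-free quotient maps in opposite directions.
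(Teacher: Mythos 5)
Your proposal is correct and follows essentially the same route as the paper's own proof: the center $\ZZ(C^*(G))\cong C(\widehat{N}/D_1)$ as the transport mechanism, the projection-free-center characterization of torsion-free FC-center, the local-homeomorphism argument on the principal orbit locus to push the locally Euclidean structure into $\widehat{L}$, and the structure theorem for compact connected abelian groups to conclude $\widehat{L}\cong\T^s$. Parts (ii) and (iii) are handled exactly as in the paper, so there is nothing substantive to add.
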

We will now use the above theorem to produce new examples of $C^*$-superrigid groups.  \par
Recall from the introduction and Example \ref{example:crystallographic_group_defn} that the wallpaper groups are defined to be the crystallographic groups with Hirsch length two, and that there are 17 of them.

\begin{thm}\label{wallpaper_c*_superrigid}
    All 17 crystallographic groups are $C^*$-superrigid.
\end{thm}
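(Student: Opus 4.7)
The plan is to combine Theorem \ref{crystallographic_is_preserved} with standard $C^*$-invariants and then complete the argument by a finite case check against the table in the appendix. Suppose $G$ is one of the $17$ wallpaper groups and $H$ is any discrete group with $C^*(G)\cong C^*(H)$. Theorem \ref{crystallographic_is_preserved} applied to $G$ immediately forces $H$ to be a $2D$ crystallographic group with the same point group order as $G$; in particular, $H$ is itself a wallpaper group. Thus, the $C^*$-superrigidity question for each wallpaper group is reduced to distinguishing $G$ from every other wallpaper group that shares its point group order, using only invariants of $C^*(G)$.

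To do the distinguishing, I would invoke the bulleted list of $C^*$-invariants preceding the theorem: the $K$-theory groups $K_0(C^*(G))$ and $K_1(C^*(G))$ (computed in \cite{yang1998crossed}), together with the invariants of the abelianization $C^*(G_{ab})$ coming from \cite[Cor.~1.3]{free_nilp_c*_superrigid}, namely $h(G_{ab})$ and $|\mathrm{Tor}(G_{ab})|$. The strategy is then: partition the $17$ wallpaper groups by point group order, and within each class verify directly from the appendix table that the tuple
\[
\bigl(|D|,\ K_0(C^*(G)),\ K_1(C^*(G)),\ h(G_{ab}),\ |\mathrm{Tor}(G_{ab})|\bigr)
\]
is different for any two non-isomorphic wallpaper groups in the class. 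Once this tuple separates all $17$ groups, any $H$ with $C^*(G)\cong C^*(H)$ must have the same tuple, hence must be isomorphic to $G$ as a wallpaper group.

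The main obstacle is the case analysis within point group orders where several wallpaper groups coexist, in particular the order-$4$ class $\{\mathrm{pmm},\mathrm{pmg},\mathrm{pgg},\mathrm{cmm},\mathrm{p4}\}$ and the order-$6$ class $\{\mathrm{p3m1},\mathrm{p31m},\mathrm{p6}\}$, where one has to confirm that torsion in the abelianization (splitting $\mathrm{p4}$ from the rest by $h(G_{ab})$) and the finer $K$-theory data (e.g.\ separating $\mathrm{p3m1}$ from $\mathrm{p31m}$, or $\mathrm{pmm}$ from $\mathrm{cmm}$) genuinely suffice. If for some pair the tuple happened to coincide, one would have to inject an additional invariant; but a scan of the appendix indicates this does not occur, so no new invariant will be needed. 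The remaining orders ($1$, $2$, $3$, $8$, $12$) either contain a unique wallpaper group or are separated immediately by $|\mathrm{Tor}(G_{ab})|$, so those subcases are essentially automatic.

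With the case check complete, one concludes that for every wallpaper group $G$ and every discrete group $H$ with $C^*(G)\cong C^*(H)$, the group $H$ must be isomorphic to $G$, which is precisely the statement of $C^*$-superrigidity for all $17$ wallpaper groups.
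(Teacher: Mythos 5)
Your proposal is correct and follows essentially the same route as the paper: Theorem \ref{crystallographic_is_preserved} forces $H$ to be a wallpaper group with the same point group order, and then the tuple of $K$-theory and abelianization invariants from the appendix table separates all $17$ groups. Two of your parenthetical attributions are off — p3m1 and p31m share the same $K$-theory $(\Z^5,\Z)$ and are separated by $|\mathrm{Tor}(G_{ab})|$ ($2$ versus $6$), not by finer $K$-theory, and p4 is split from p2mm by $|\mathrm{Tor}(G_{ab})|$ ($8$ versus $16$) rather than by $h(G_{ab})$, which vanishes for both — but since your argument checks the full tuple, these slips do not affect the conclusion.
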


\begin{proof}
    Let $G$ be a wallpaper group and $H$ be a discrete group such that $C^*(G)\cong C^*(H)$. Theorem \ref{crystallographic_is_preserved} ensures that $H$ is a wallpaper group. This narrows the possible isomorphism classes for $H$ down to 17. Furthermore, the following must also be true.
\begin{itemize}
    \item The point groups of $G$ and $H$ have the same order (Theorem \ref{crystallographic_is_preserved}).
    \item $K_i(C^*(G))\cong K_i(C^*(H))$
    \item $C^*(G_{ab})\cong C^*(H_{ab})$ (\cite[Cor. 1.3]{free_nilp_c*_superrigid}). In particular, $|Tor(G_{ab})|=|Tor(H_{ab})|$.
\end{itemize}
The K-theory of the group $C^*$-algebras of all wallpaper groups has been computed in \cite{yang1998crossed}. Moreover, their point groups and abelianizations are well-known. For example, they can be computed using the computer algebra system GAP \cite{GAP4}. To complete the proof we compare the data in Table \ref{tab: wall} observing that for any distinct pair of wallpaper groups one of the above bullet points is not satisfied. We conclude that $G\cong H$.
\end{proof}

   
\begin{table}[h!]
\begin{tabular}{|C|C||C|C|C|C|C|C}\hline
\text{\#} & G & K_0(G) & K_1(G) & H_1(G)\cong G_{ab} & \te{Point}{\te{ Group}}\\\hline\hline
1^* & p1 &\Z^2 &\Z^2 & \zn^2&\{e\}\\\hline
2 & p2 &\Z^6 &0 & \zn_2\times\zn_2\times \zn_2 &\Z_2\\\hline
3 & pm &\Z^3 &\Z^3 & \zn\times\zn_2\times\zn_2&\Z_2\\\hline
4^* & pg &\Z &\Z\times \Z_2 & \zn\times\zn_2 & \Z_2\\\hline
5 & cm & \Z^2& \Z^2& \zn\times\zn_2 &\Z_2\\\hline
6 & p2mm & \Z^9& 0& \zn_2\times\zn_2\times\zn_2\times\zn_2 & \Z_2\times \Z_2 \\\hline
7 & p2mg & \Z^4& \Z&\zn_2\times\zn_2\times\zn_2& \Z_2\times \Z_2\\\hline
8 & p2gg &\Z^3 &\Z_2 &\zn_4\times\zn_2 & \Z_2\times \Z_2\\\hline
9 & c2mm &\Z^5 &0 & \zn_2\times\zn_2\times\zn_2&\Z_2\times \Z_2\\\hline
10 & p4 & \Z^9& 0& \zn_4\times \zn_2& \Z_4\\\hline
11 & p4mm & \Z^9&0 & \zn_2\times\zn_2\times \zn_2& D_{2\cdot4} \\\hline
12 & p4gm &\Z^6 &0 & \zn_4\times\zn_2  & D_{2\cdot4} \\\hline
13 & p3 &\Z^8 & 0& \zn_3\times\zn_3& \Z_3\\\hline
14 & p3m1 &\Z^5 &\Z & \zn_2& S_3 \\\hline
15 & p31m &\Z^5 & \Z& \zn_3\times \zn_2& S_3\\\hline
16 & p6 & \Z^{10}&0 & \zn_3\times \zn_2& \Z_6\\\hline
17 & p6mm &\Z^8 & 0&\zn_2\times \zn_2 & D_{2\cdot6}\\\hline
\end{tabular}
\vspace{.1in}
    \caption{$(*)$ indicates the group is torsion-free, i.e., a Bieberbach group.}
    \label{tab: wall}
\end{table}

\newpage

\printbibliography

\end{document}